\documentclass[a4paper,11pt]{article}
\usepackage[margin=2.5cm]{geometry}
\usepackage{amsmath,amsthm,amssymb,enumerate, amsbsy}
\usepackage{mathrsfs}  
\usepackage[T1]{fontenc}
\usepackage{gensymb}
\usepackage{longtable}
\usepackage[square,sort&compress,comma,numbers]{natbib}
\usepackage[sc]{mathpazo}
\usepackage{multirow} 
\usepackage{newtxtext,newtxmath}
\usepackage{graphicx}
\usepackage{epstopdf}
\usepackage{cancel}
\usepackage[colorinlistoftodos]{todonotes}
\usepackage{epsfig, setspace, subfig}
\usepackage{tikz}
\usepackage[titletoc]{appendix}
\usepackage{caption}
\usetikzlibrary{shapes,calc}
\usepackage{verbatim}
\usepackage{mathrsfs}
\usepackage{accents}
\usepackage[utf8]{inputenc}
\usepackage{float}
\restylefloat{table}
\usepackage{multirow,diagbox,tabularx,blindtext,tabulary,tabularht,longtable,blkarray}
\usetikzlibrary{decorations.pathmorphing}
\usetikzlibrary{decorations.pathreplacing}
\usetikzlibrary{positioning}
\usetikzlibrary{shapes}
\usetikzlibrary{arrows}
\usetikzlibrary{patterns}
\usetikzlibrary{fadings}
\usetikzlibrary{plotmarks}
\usetikzlibrary{calc}
\usetikzlibrary{intersections}
\tikzstyle{every picture}+=[font=\footnotesize]
\usepackage{paralist}
\usepackage{empheq}
\usepackage{hyperref}
\hypersetup{allcolors=blue}
\newtheorem{theorem}{Theorem}[section]

\newtheorem{lemma}[theorem]{Lemma}

\theoremstyle{definition}

\theoremstyle{remark}
\newtheorem{remark}{Remark}[section]

\numberwithin{equation}{section}

\usepackage{hyperref}
\hypersetup{allcolors=blue}
\usepackage[linesnumbered,ruled,vlined]{algorithm2e}
\usepackage{xcolor}  
\usepackage{colortbl}

\usepackage{subfig}
\usepackage{wrapfig}

\newcommand{\Qvec}{{\boldsymbol Q}}
\newcommand{\Mvec}{{\boldsymbol M}}
\newcommand{\dx}{\,{\rm dx}}
\newcommand{\abs}[1]{\left\lvert#1\right\rvert}
\newcommand{\norm}[1]{{\lvert\kern-0.25ex \lvert #1 \rvert\kern-0.25ex \rvert}}
\newcommand{\disr}{r_\calT}
\newcommand{\disp}{p_\calT}
\newcommand{\distr}{{\tilde r}_\calT}
\newcommand{\distp}{{\tilde p}_\calT}
\newcommand{\disK}{K_\calT}
\newcommand{\disM}{M_\calT}
\newcommand{\disLamb}{\lambda_\calT}
\newcommand{\disu}{u_\calT}

\newcommand{\Real}{\mathbb{R}}
\newcommand{\polP}{\mathbb{P}}
\newcommand{\bbI}{\mathbb{I}}

\newcommand{\ba}{{\boldsymbol a}}
\newcommand{\bb}{{\boldsymbol b}}
\newcommand{\be}{{\boldsymbol e}}
\newcommand{\bn}{{\boldsymbol n}}
\newcommand{\bt}{{\boldsymbol t}}
\newcommand{\bv}{{\boldsymbol v}}
\newcommand{\bw}{{\boldsymbol w}}
\newcommand{\bx}{{\boldsymbol x}}
\newcommand{\by}{{\boldsymbol y}}
\newcommand{\bB}{{\boldsymbol B}}
\newcommand{\bK}{{\boldsymbol K}}
\newcommand{\bR}{{\boldsymbol R}}
\newcommand{\bS}{{\boldsymbol S}}
\newcommand{\bX}{{\boldsymbol X}}
\newcommand{\bY}{{\boldsymbol Y}}
\newcommand{\bzero}{{\boldsymbol 0}}

\newcommand{\bnu}{{\boldsymbol \nu}}
\newcommand{\btau}{{\boldsymbol \tau}}

\newcommand{\calG}{\mathcal{G}}
\newcommand{\calN}{\mathcal{N}}
\newcommand{\calT}{\mathcal{T}}
\newcommand{\calV}{\mathcal{V}}

\newcommand{\tr}{^{\mathrm{T}}}
\DeclareMathOperator*{\argmin}{arg\,min}
\newcommand{\SCAL}{{\cdot}}
\newcommand{\vertex}{\calV_\calT}
\newcommand{\vertexi}{\calV_\calT^\circ}
\newcommand{\vertexb}{\calV_\calT^\partial}
\newcommand{\sfi}{\mathsf{i}}

\title{An energy-decreasing algorithm for the finite element approximation of ferronematic equilibrium states}

\begin{document}

\author{ \stepcounter{footnote} \stepcounter{footnote}  Alexandre Ern\footnote{CERMICS, CNRS, ENPC, Institut Polytechnique de Paris, 6 \& 8 avenue B. Pascal, 77455 Marne-la-Vallée, France, and Inria, 48 Rue Barrault, 75647 Paris, France. Email: alexandre.ern@enpc.fr.}  $\;\;$    
Ruma R. Maity\footnote{Department of Mathematics, Indian Institute of Science, Bangalore, 560012, India and Engineering Mathematics, University of Innsbruck, 6020 Innsbruck, Austria. Email: rumaranim@iisc.ac.in.} 
}
\maketitle

\begin{abstract}
We develop an energy-decreasing algorithm for the finite element approximation of two-dimensional ferronematic equilibrium states. The problem is formulated as the minimization of the harmonic energy of two two-dimensional vector fields, both with prescribed length, together with an additional nonlinear relation on the orientation of the two vectors. The finite element setting is based on piecewise continuous finite elements on a weakly acute triangulation. The computational realization of the energy-decreasing algorithm employs a decomposition-coordination framework and a Uzawa-like iteration. Numerical experiments are presented to illustrate the computational performances of the algorithm.
\end{abstract}

\noindent {\bf Keywords:} Liquid crystals, ferronematics, harmonic map, finite element method, energy-decreasing iterations, decomposition-coordination method

\medskip

\noindent {\bf Mathematics Subject Classification.} 65N30, 49M27, 76A15, 58E20

\section{Introduction}

Nematic liquid crystals are a physical state of a material where the constituent molecules exhibit long-range orientational ordering but no long-range positional ordering \cite{deGPr:95}. 
The study of composite systems consisting of magnetic nanoparticles dispersed in a nematic liquid crystal is commonly referred to as ferronematics in the literature \cite{Brochard_DE_Gennes_1970,Mertelj_Lisjak_Drofenik_Copic_2013}. 
The nemato-magnetic coupling enhances magnetic susceptibility, enabling low-field control of material behavior with potential applications in optics, sensors, and telecommunications \cite{Katona2014,Hess2015}. 
Ferronematic equilibrium states are sought as minimizers of a suitable free energy density of the system, which generally consists of the Landau--de Gennes free energy density for the nematic liquid crystal, plus the Ginzburg--Landau free energy density for the magnetization, and the nemato-magnetic coupling energy density (see, e.g.,
  \cite{Ferronematics_2D}    and the references cited therein). We focus here on a two-dimensional setting, where the ferronematic system occupies an open, bounded, connected, Lipschitz set (domain) $\Omega \subset \mathbb{R}^2$. In this case, the two variables describing the state of the system are: (i) The nematic liquid crystal tensor, $\Qvec$, a mapping from $\Omega$ to the space of symmetric, trace-free  matrices in $\Real^{2\times 2}$; this is the so-called order parameter that describes the orientational ordering; (ii) The magnetization vector, $\Mvec$, a mapping from $\Omega$ to $\Real^2$, represents the spatially averaged magnetic moment of the suspended nanoparticles. Owing to the two-dimensional setting, the order parameter $\Qvec$ can be viewed as a vector in $\Real^2$ as well. Altogether, the dependent variables are collected in the $\Real^4$-valued vector $\Psi:=(\Qvec,\Mvec)\tr$. 

We are particularly interested in the practically important case where the ratio ($\ell$) between the material-dependent length scale and  the computational domain length scale takes very small values.
In this situation, equilibrium states can still be explored numerically using the above free energy density, but this becomes particularly demanding computationally as the Euler--Lagrange minimization conditions lead to a system of four coupled nonlinear PDEs. Examples can be found in \cite{Ferronematics_2D,FerroRMAMNN2021,Dalby2022}.   An alternative approach is to formally let the parameter $\ell$ approach zero and tackle directly the free energy density minimization under the additional constraint resulting from the above limit. Specifically, one enforces length constraints on the two-dimensional vectors $\Qvec$ and $\Mvec$, and a nonlinear relation expressing $\Qvec$ in terms of $\Mvec$. Altogether, this means that the values of $\Psi$ are prescribed to lie in $\calN$, a connected, compact, one-dimensional smooth submanifold of $\Real^4$. This is the approach followed herein, where ferronematic equilibrium states in the above asymptotic limit are modeled by minimizing the harmonic energy functional $E : H^1(\Omega;\Real^4) \rightarrow \Real$ such that (see Section~\ref{sec:cont} for more details on the notation)
\begin{equation} 
E(\Psi) := \frac12 \int_\Omega \norm{\nabla \Psi}^2_{\Real^{4\times2}} \dx = \frac12 \int_\Omega \norm{\nabla \Qvec}^2_{\Real^{2\times2}} \dx + \frac12 \int_\Omega \norm{\nabla \Mvec}^2_{\Real^{2\times2}} \dx,
\end{equation}
subjected to the constraint that $\Psi$ takes values in $\calN$. Moreover, boundary values are prescribed on $\partial\Omega$ that are compatible with the above constraint. 
We assume the existence of a minimizer.  Sufficient conditions for the Ginzburg--Landau problem are discussed in \cite{Bethuel}  based on the notion of winding numbers for the Dirichlet boundary datum.

The main goal of the paper is to analyze the finite element approximation of the above constrained minimization problem of the harmonic energy and to propose an energy-decreasing iterative algorithm with reasonable computational costs. 
We consider piecewise affine, continuous finite elements on a weakly acute triangulation $\calT$  \cite{Xu1999,Qiang:1998:GL,Brandts2009}. Weakly acute triangulations are commonly used in the context of finite element approximations of harmonic energy minimization under length constraints. In the present setting, this assumption allows us to write the discrete harmonic energy as a sum of local positive contributions indexed by pairs of mesh nodes sharing a mesh edge. Furthermore, the constraint of taking values in $\calN$ is enforced at the mesh nodes. Incidentally, this means that we are considering a nonconforming approximation as the discrete functions $\Psi_\calT:\Omega \rightarrow \Real^4$ do not take values in $\calN$ everywhere in $\Omega$ (but only at the mesh nodes).

We first study how to devise energy-decreasing iterations at the continuous level. 
To this purpose, we leverage a representation of every element in the constraint manifold $\calN$ by a unit vector $\bn$ from the unit circle $\bS$ in $\Real^2$. The key relation to ensure the energy-decreasing property is the identity~\eqref{eq:norm_psi} below, relating the (spatial) gradient of $\Psi$ to the (spatial) gradient of $\bn$. 
Unfortunately, it is not possible to extend this result to the discrete level. The first key 
idea forward, exploiting the weakly acute property of the mesh, is to replace the identity~\eqref{eq:norm_psi} at the continuous level by the identity~\eqref{eq:engy_eval}, which evaluates the harmonic energy of a discrete field $\Psi_\calT$ by a finite sum of positive contributions resulting from local increments of unit vectors associated with pairs of mesh nodes sharing a mesh edge. However, two unit vectors per mesh node need to be considered (and not just one), the two being related by a nonlinear mapping from $\bS$ onto itself. The second key idea forward is achieved by Lemma~\ref{lem:phi} which uncovers a subtle geometric property of the constraint manifold $\calN$. The result of Lemma~\ref{lem:phi} brings the central design condition that guarantees the energy-decreasing property of the present iterative algorithm.  

To enhance the computational efficiency of the energy-decreasing iterative algorithm, we adopt a decomposition-coordination approach in the spirit of Fortin and Glowinski's framework for constrained optimization  with separable objective functions and coupled constraints \cite{Fortin:Glowinski:1983}.   The constraint is incorporated using a Lagrange multiplier within an augmented Lagrangian approach \cite{Bertsekas:1982,Bertsekas:1999}. A critical point of the augmented Lagrangian is sought using an Uzawa-type iterative algorithm  \cite{Arrow:Hurwicz:Uzawa:1958,Fortin:Glowinski:1983}.  As the primal minimization problem remains nonlinear, a further simplification is introduced by linearizing the constraint at each step of the Uzawa algorithm. 

The minimization of the harmonic energy under prescribed length constraints is already well explored in the literature. The simplest setting is the Oseen--Frank model which minimizes the harmonic energy of a unit direction vector   \cite{Alouges:1997:Harmonic_map,Cohen:Lin:Luskin,Lin:Luskin:1989},  see also \cite{Pierre2004}  for further computational enhancements,  \cite{Bartels:2005:FEscheme_harmonic_maps}  for the stability and convergence analysis and  \cite{Bartels:2007:harmonic-map-flow}  for the implicit time discretization of harmonic map flows into spheres. More complex models have been addressed as well, including the Eriksen model where the dependent variable is the tensor $\Qvec=s(\bn\otimes\bn - d^{-1}\bbI)$, where the order parameters are the scalar-valued function $s$ and the field $\bn$ taking values in the unit sphere of $\Real^d$, and $d$ is the ambient space dimension \cite{Bartels:2014:Q-tensor,Nochetto:2017:FEM,Borthagaray2020}.  However, to the best of the authors' knowledge, numerical methods addressing the present ferronematic equilibrium problem have not yet been analyzed.

The remainder of this paper is organized as follows. Section~\ref{sec:cont} introduces the admissible constraint manifold $\calN$ and formulates the energy minimization problem. This section also outlines the continuous energy-decreasing algorithm. The main result of this section is Theorem~\ref{th:decay_cont}. Section~\ref{sec:FEM} develops the finite element approximation, leading to the discrete energy-decreasing algorithm. The main results of this section are Lemma~\ref{lem:phi} and Theorem~\ref{thm:engy-decr-disc}.
Section~\ref{sec:algo} describes important computational aspects, including the decomposition-coordination approach and the Uzawa-like iterations. Finally, Section~\ref{sec:results} reports numerical experiments on two examples featuring nonzero smooth Dirichlet boundary conditions  on a square domain,  thereby illustrating the computational performance of the proposed algorithms.

\section{Continuous setting}
\label{sec:cont}

For positive integer numbers $p,q$, $\|\bullet\|_{\Real^q}$ denotes the Euclidean norm in $\Real^q$ and $\|\bullet\|_{\Real^{p\times q}}$ denotes the Frobenius norm in $\Real^{p\times q}$.
The Cartesian components of a vector $\Psi\in\Real^q$ are denoted $(\Psi_i)_{1\le i\le q}$.

\subsection{Admissible manifold}

Let $\bS$ be the unit sphere (circle) in $\Real^2,$  $\be_1:=(1,0)\tr$ the first vector of the Cartesian basis of $\Real^2$, and $\bbI$ the $2\times 2$ identity matrix. We consider the (nonlinear) map
\begin{equation}
\bR : \bS \rightarrow \bS, \quad \bn \mapsto \bR(\bn) := (2\bn\otimes\bn-\bbI)\be_1.
\end{equation}
An elementary computation shows that, if $\bn\in\bS$ is parametrized as $\bn:=(\cos(\varphi),\sin(\varphi))\tr$, then $\bR(\bn)=(\cos(2\varphi),\sin(2\varphi))\tr$. Given two positive real numbers $Q_c$ and $M_c$, we 
define the (nonlinear) map
\begin{equation} \label{eq:calG}
\calG : \bS \rightarrow \Real^4, \quad \bn \mapsto \calG(\bn):= (Q_c\bR(\bn),M_c\bn)\tr.
\end{equation}
It is readily seen that $\calG : \bS \rightarrow \calN$ is a smooth diffeomorphism, where
\begin{equation}\label{equn:manifold}
\calN := \{ \calG(\bn)  \;|\; \bn \in \bS \} \subset \Real^4.
\end{equation}
We observe that $\calN$ is a connected, compact, one-dimensional smooth submanifold of $\Real^4$. For all $\bn\in\bS$, let $T_{\bn}\bS$ denote the tangent space of $\bS$ at $\bn$, i.e., $T_{\bn}\bS=\{\bv\in\Real^2\;|\;\bv\SCAL\bn=0\}$. Similarly, let $T_{\calG(\bn)}\calN$ denote the tangent space of $\calN$ at $\calG(\bn)$. The differential of $\calG$ at $\bn\in\bS$, $d\calG(\bn): T_{\bn}\bS \rightarrow T_{\calG(\bn)}\calN$ is given by, for all $\bn=(n_1,n_2)\tr \in\bS$ and all $\bv=(v_1,v_2)\tr \in T_{\bn}\bS$,
\begin{equation} \label{eq:diff_calG}
\langle d\calG(\bn),\bv \rangle = 
\begin{pmatrix}
2Q_c(v_1 \bn+n_1 \bv) \\ M_c\bv 
\end{pmatrix}
= 
\begin{pmatrix}
4Q_cn_1v_1 \\
2Q_c(n_1v_2+n_2v_1) \\
M_cv_1 \\
M_cv_2
\end{pmatrix}.
\end{equation}

\begin{lemma}[Norm] \label{lem:norm_dG}
We have $\|\langle d\calG(\bn),\bv \rangle\|_{\Real^4}^2=(4Q_c^2+M_c^2)\|\bv\|_{\Real^2}^2$.
\end{lemma}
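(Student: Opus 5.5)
The plan is to compute the squared norm directly from the explicit component form \eqref{eq:diff_calG}, splitting it into the $\Qvec$-block (first two components) and the $\Mvec$-block (last two components). The $\Mvec$-block is immediate: its squared norm is $M_c^2(v_1^2+v_2^2)=M_c^2\|\bv\|_{\Real^2}^2$. So everything reduces to showing that the $\Qvec$-block has squared norm $4Q_c^2\|\bv\|_{\Real^2}^2$.

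For the $\Qvec$-block I will use the two constraints $\bn\in\bS$ and $\bv\in T_{\bn}\bS$, that is, $n_1^2+n_2^2=1$ and $n_1v_1+n_2v_2=0$. The squared norm is
\[
4Q_c^2\bigl(4n_1^2v_1^2+(n_1v_2+n_2v_1)^2\bigr).
\]
Expanding the square gives $4n_1^2v_1^2+n_1^2v_2^2+n_2^2v_1^2+2n_1n_2v_1v_2$. The main (and only) obstacle is to recognise that this combination equals $v_1^2+v_2^2$.

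To see this, I will use the tangency relation in the form $n_2v_2=-n_1v_1$. This gives $2n_1n_2v_1v_2=-2n_1^2v_1^2$, so the expression becomes
\[
2n_1^2v_1^2+n_1^2v_2^2+n_2^2v_1^2.
\]
Next I write $2n_1^2v_1^2=n_1^2v_1^2+n_2^2v_2^2$, which again uses $(n_1v_1)^2=(n_2v_2)^2$. The expression then becomes $(n_1^2+n_2^2)(v_1^2+v_2^2)$, which equals $\|\bv\|_{\Real^2}^2$ because $\bn\in\bS$.

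Adding the two blocks then yields $(4Q_c^2+M_c^2)\|\bv\|_{\Real^2}^2$.

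As a sanity check, I can also argue via the parametrization: with $\bn=(\cos\varphi,\sin\varphi)\tr$ and $\bv=t(-\sin\varphi,\cos\varphi)\tr$, the $\Qvec$-block is the derivative of $Q_c(\cos 2\varphi,\sin 2\varphi)\tr$ times $t$. Its norm is $2Q_c|t|=2Q_c\|\bv\|_{\Real^2}$, which confirms the factor $4Q_c^2$.
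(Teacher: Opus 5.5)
Your proof is correct and takes essentially the same route as the paper. Both arguments use the tangency relation $n_1v_1+n_2v_2=0$ and $\|\bn\|_{\Real^2}=1$ to reduce the $\Qvec$-block to $4Q_c^2(n_1^2+n_2^2)(v_1^2+v_2^2)$; the paper first rewrites the first component as $2Q_c(n_1v_1-n_2v_2)$ and applies the two-squares identity, whereas you expand and substitute, and the $\Mvec$-block is immediate in both.
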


\begin{proof}
Since $v_1n_1+v_2n_2=0$, the first component of $\langle d\calG(\bn),\bv \rangle$
rewrites $2Q_c(n_1v_1-n_2v_2)$. Since $(n_1v_1-n_2v_2)^2+(n_1v_2+n_2v_1)^2=(n_1^2+n_2^2)(v_1^2+v_2^2) = \|\bn\|_{\Real^2}^2 \|\bv\|_{\Real^2}^2=\|\bv\|_{\Real^2}^2$, the conclusion is straightforward.
\end{proof}

\subsection{Minimization problem}

Let $\Omega$ be an open, bounded, connected, Lipschitz set (domain) in $\Real^2$. Given some Dirichlet boundary data $\Psi^b \in H^{\frac12}(\partial \Omega;\Real^4)$ taking values in $\calN$ a.e.~in $\partial\Omega$, we consider the functional space
\begin{equation}
W := \{\Phi \in H^1(\Omega;\Real^4)\;|\; \Phi(\bx)\in\calN \;\text{a.e.~$\bx\in\Omega$} \;\wedge \; \Phi|_{\partial\Omega}=\Psi^b\},
\end{equation}
and the (Dirichlet) energy functional
\begin{equation} \label{eq:def_E}
E : H^1(\Omega;\Real^4) \rightarrow \Real, \quad \Phi \mapsto E(\Phi) := \frac12 \int_\Omega \norm{\nabla \Phi}^2_{\Real^{4\times2}} \dx.
\end{equation}
We assume that there exists $\Psi^* \in W$.  Sufficient conditions for the Ginzburg--Landau problem are discussed in \cite{Bethuel} based on the notion of winding numbers for $\Psi^b.$

Our goal is to solve the following constrained minimization problem:
\begin{equation} \label{eq:inf}
\min_{\Phi \in W} E(\Phi).
\end{equation}
As $E$ is strongly convex and $W$ is a nonempty, closed (but nonconvex) set, the model problem~\eqref{eq:inf} admits at least one solution in $W$.

\begin{lemma}[Composition using $\calG$] \label{lem:compose}
\textup{(i)} Let $\bn\in H^1(\Omega;\Real^2)$ take values in $\bS$ a.e.~in $\Omega$; then $\Psi:=\calG \circ \bn \in H^1(\Omega;\Real^4)$ and takes values in $\calN$ a.e.~in $\Omega$.
\textup{(ii)} Let $\Psi \in H^1(\Omega;\Real^4)$ take values in $\calN$ a.e.~in $\Omega$; 
then, $\bn := \calG^{-1} \circ \Psi \in H^1(\Omega;\Real^2)$ and takes values in $\bS$ a.e.~in $\Omega$.
\end{lemma}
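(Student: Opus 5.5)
For part (i), I will extend $\calG$ from $\bS$ to all of $\Real^2$ by the same formula, $\tilde\calG(\bx) := (Q_c(2\bx\otimes\bx-\bbI)\be_1, M_c\bx)\tr$. Its components are polynomials of degree at most two: $Q_c(2x_1^2-1)$, $2Q_cx_1x_2$, $M_cx_1$, $M_cx_2$.

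A polynomial is not globally Lipschitz, so I will truncate. Let $\chi\in C^\infty_c(\Real^2;\Real^2)$ equal the identity on the ball of radius $2$. Then $F:=\tilde\calG\circ\chi$ is smooth and globally Lipschitz. Since $\|\bn\|=1$ a.e., we have $F\circ\bn=\calG\circ\bn$ a.e. The standard chain rule for Lipschitz $C^1$ maps composed with $H^1$ functions (valid on a bounded domain, where constants belong to $L^2$) then gives $\Psi\in H^1(\Omega;\Real^4)$ with $\nabla\Psi = dF(\bn)\nabla\bn$. The values lie in $\calN$ a.e. by the definition \eqref{equn:manifold}.

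For part (ii), the key observation is that $\calG^{-1}$ is simply the restriction to $\calN$ of the linear projection $P:\Real^4\to\Real^2$, $P(\Psi) := M_c^{-1}(\Psi_3,\Psi_4)\tr$. Indeed, $P(\calG(\bn))=\bn$ for every $\bn\in\bS$.

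So set $\bn := P\circ\Psi$. A linear map of an $H^1$ function is in $H^1$, so $\bn\in H^1(\Omega;\Real^2)$ with $\nabla\bn = M_c^{-1}(\nabla\Psi_3,\nabla\Psi_4)\tr$. Since $\Psi(\bx)\in\calN$ a.e., we have $\Psi(\bx)=\calG(\bm)$ for a unique $\bm\in\bS$, using that $\calG$ is a bijection onto $\calN$. Then $\bn(\bx)=P(\calG(\bm))=\bm\in\bS$, and $\calG^{-1}\circ\Psi = P\circ\Psi$ a.e.

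Part (ii) is therefore essentially trivial. The only delicate point is part (i): justifying the chain rule for a nonlinear (quadratic) $\calG$. There, one must use the truncation $\chi$ (or, alternatively, the boundedness $\|\bn\|\le 1$ together with $\bn\in L^\infty$) to make the product rule $\nabla(n_in_j)=n_i\nabla n_j+n_j\nabla n_i$ legitimate in $L^2$. I would handle this directly with the product rule for $H^1\cap L^\infty$ functions, which avoids the truncation altogether.
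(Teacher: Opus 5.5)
Your proposal is correct and follows the paper's route. For (i), the paper likewise differentiates through the explicit formula, writing $\partial_k\Psi=\langle d\calG(\bn),\partial_k\bn\rangle$ and controlling it via Lemma~\ref{lem:norm_dG}; you justify this chain rule more carefully, by truncation or by the $H^1\cap L^\infty$ product rule. For (ii), the paper uses exactly your observation that $\calG^{-1}\circ\Psi=M_c^{-1}(\Psi_3,\Psi_4)\tr$.

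The only extra output of the paper's argument is the identity \eqref{eq:norm_psi}, reused in Theorem~\ref{th:decay_cont}. It additionally needs $\bn\cdot\partial_k\bn=0$ a.e.\ (from differentiating $\|\bn\|_{\Real^2}^2=1$), but it is not part of the lemma as stated.
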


\begin{proof}
(1) Proof of (i). By construction, $\Psi$ takes values in $\calN$ a.e.~in $\Omega$. Since $\calG$ is bounded and $\Omega$ is a bounded set, $\Psi \in L^2(\Omega;\Real^4)$. Letting $\partial_k$ denote the partial derivative in the $k$-th Cartesian direction, for all $k\in\{1,2\}$, we observe that 
\[
\partial_k \Psi = \langle d\calG(\bn),\partial_k\bn\rangle.
\] 
Owing to Lemma~\ref{lem:norm_dG}, we infer that 
\begin{equation} \label{eq:norm_psi}
\|\partial_k \Psi\|_{\Real^4}^2 = (4Q_c^2+M_c^2)\|\partial_k\bn\|_{\Real^2}^2 \quad \forall k\in\{1,2\}.
\end{equation} 
Since $\bn\in H^1(\Omega;\Real^2)$ by assumption, this shows that $\Psi \in H^1(\Omega;\Real^4)$.

(2) Proof of (ii). The proof is straightforward since $\bn := \calG^{-1} \circ \Psi = M_c^{-1}(\Psi_3,\Psi_4)\tr$. 
\end{proof}

\subsection{Energy-decreasing algorithm}

Before presenting the algorithm, we recall some elementary properties of the 
projection $P_{\bS}$ from $\Real^2_*:= \Real^2\setminus\{\bzero\}$ onto $\bS$, i.e.,
$P_{\bS}(\bx):= \frac{1}{\|\bx\|_{\Real^2}}\bx$ for all $\bx\in \Real^2_*$.
Set 
\begin{equation} \label{eq:outside_ball}
\Real^2_{\ge1}:=\{\bx\in\Real^2 \;|\; \|\bx\|_{\Real^2}\ge1\}.
\end{equation}

\begin{lemma}[Projection onto sphere] \label{lem:PS1}
\textup{(i)} $\|P_{\bS}(\bx)-P_{\bS}(\by)\|_{\Real^2} \le \|\bx-\by\|_{\Real^2}$ for all $\bx,\by \in \Real^2_{\ge1}$.
\textup{(ii)} For all $\bv \in H^1(\Omega;\Real^2)$ taking values in $\Real^2_{\ge1}$, $P_{\bS}(\bv) \in H^1(\Omega;\Real^2)$ and takes values in $\bS$; moreover, $\|\nabla P_{\bS}(\bv)\|_{\Real^{2\times2}} \le \|\nabla\bv\|_{\Real^{2\times2}}$. 
\end{lemma}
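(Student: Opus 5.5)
For (i), the plan is to show directly that $P_{\bS}$ is $1$-Lipschitz on $\Real^2_{\ge1}$ by an elementary computation. Write $a:=\|\bx\|_{\Real^2}\ge1$ and $b:=\|\by\|_{\Real^2}\ge1$, and let $c:=\bx\SCAL\by$. Then
\[
\|\bx-\by\|_{\Real^2}^2-\|P_{\bS}(\bx)-P_{\bS}(\by)\|_{\Real^2}^2 = a^2+b^2-2c - 2 + \frac{2c}{ab}.
\]
The goal is to show this is nonnegative. Using $a^2+b^2\ge 2ab$, the right-hand side is at least
\[
2ab-2-2c\Big(1-\frac{1}{ab}\Big) = 2\Big(1-\frac1{ab}\Big)(ab-c).
\]
This is nonnegative because $ab\ge1$ and, by Cauchy--Schwarz, $c\le ab$. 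So the claim reduces to two inequalities and a factorization. An alternative route uses the radial map $\bx\mapsto\bx/\|\bx\|$, which is the metric projection onto the closed unit ball's complement's boundary. I prefer the explicit computation since it is self-contained.

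For (ii), the plan is to use a chain rule for Lipschitz compositions. Since $P_{\bS}$ is smooth on $\Real^2_*$, the natural approach is to extend it to a globally Lipschitz $C^1$ map. One candidate is $\tilde P(\bx):=\bx/\max(\|\bx\|,1)$, but this is only Lipschitz; the cleaner choice is $\tilde P(\bx):=\bx/\eta(\|\bx\|)$ with $\eta$ smooth, $\eta\ge1/2$, and $\eta(t)=t$ for $t\ge1$. This $\tilde P$ is $C^1$ with bounded derivative and coincides with $P_{\bS}$ on $\Real^2_{\ge1}$. The standard chain rule for $C^1$ maps with bounded gradient composed with $H^1$ functions then gives $P_{\bS}(\bv)=\tilde P(\bv)\in H^1(\Omega;\Real^2)$, with
\[
\partial_k P_{\bS}(\bv)=DP_{\bS}(\bv)\,\partial_k\bv \quad\text{a.e.}
\]
Boundedness of $P_{\bS}(\bv)$ gives the $L^2$ part, and the values lie in $\bS$ trivially.

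For the gradient bound, the plan is to compute $DP_{\bS}(\bx)=\|\bx\|^{-1}(\bbI-\hat\bx\otimes\hat\bx)$ with $\hat\bx=P_{\bS}(\bx)$. This matrix is an orthogonal projector scaled by $\|\bx\|^{-1}\le1$, so its operator norm is at most $1$ on $\Real^2_{\ge1}$. Hence $\|\partial_k P_{\bS}(\bv)\|\le\|\partial_k\bv\|$ for each $k$. Summing the squares over $k$ gives the Frobenius estimate pointwise a.e.

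The main (mild) obstacle is the justification of the chain rule. $P_{\bS}$ is only defined on $\Real^2_*$, and $\bv$ is merely $H^1$, so one cannot apply calculus directly; the smooth globally Lipschitz extension is what handles this. Once that extension is in place, everything else is an algebraic verification.
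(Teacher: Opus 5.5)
Your proof is correct, but it takes a different route from the paper. The paper gets both parts from one observation. On $\Real^2_{\ge1}$, the radial map $P_{\bS}$ coincides with the metric projection onto the closed convex unit ball $\bB^1$, and projections onto closed convex sets are nonexpansive. This gives (i) at once. Part (ii) follows ``by the same arguments'' (with a pointer to Alouges): one composes $\bv$ with the globally $1$-Lipschitz map $P_{\bB^1}$, which agrees with $P_{\bS}\circ\bv$ because $\bv$ takes values in $\Real^2_{\ge1}$.

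Your discarded candidate $\bx/\max(\|\bx\|_{\Real^2},1)$ is exactly $P_{\bB^1}$, so the paper in effect uses the extension you set aside. The price is that it needs a chain rule, with a pointwise gradient bound, for merely Lipschitz vector-valued maps. That result is true but is usually proved by mollification or difference quotients, and the paper does not spell it out.

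Your approach is more elementary and self-contained:
\begin{itemize}
\item For (i), your computation is dimension-independent and in fact gives an exact identity, $\|\bx-\by\|_{\Real^2}^2-\|P_{\bS}(\bx)-P_{\bS}(\by)\|_{\Real^2}^2=(a-b)^2+2(1-\frac{1}{ab})(ab-c)$, which quantifies the defect.
\item For (ii), your $C^1$ extension lets you use only the classical chain rule and read the bound off the Jacobian $\|\bx\|_{\Real^2}^{-1}(\bbI-\hat\bx\otimes\hat\bx)$. This also shows that the normal component of $\nabla\bv$ is annihilated and the tangential one is damped by $\|\bv\|_{\Real^2}^{-1}$.
\item Your extension $\bx/\eta(\|\bx\|_{\Real^2})$ is indeed $C^1$ at the origin even if $\eta'(0)\neq 0$, since the prefactor $\bx$ absorbs the kink of the norm.
\end{itemize}

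One correction to your side remark. The radial map is nonexpansive on $\Real^2_{\ge1}$ because it agrees there with the projection onto the convex ball $\bB^1$. It is also the projection onto the circle $\bS$, but that projection is not nonexpansive in general (take the points $\pm\epsilon\be_1$). Convexity of the ball is what matters.
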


\begin{proof}
The statement (i) is just a consequence of the fact that, when acting on $\Real^2_{\ge1}$, $P_{\bS}$ is the projection onto $\bB^1$, the unit ball of $\Real^2$ which is a closed convex set. The statement (ii) follows from the same arguments, see also 
\cite[Prop.~1]{Alouges:1997:Harmonic_map}.
\end{proof}

Given an initial guess $\Psi^0\in W$, our goal is to generate a sequence $(\Psi^j)_{j=1}^\infty$ in $W$ that is energy-decreasing, i.e.,
\begin{equation} \label{eq:decay}
E(\Psi^{j+1}) \le E(\Psi^j), \quad \forall j\ge0.
\end{equation}
At iteration $j\ge0$, given $\Psi^j\in W$, 
the algorithm proceeds in four steps:
\begin{enumerate}
\item Set $\bn^j:= \calG^{-1} \circ \Psi^j$. Owing to Lemma~\ref{lem:compose}(ii), $\bn^j  \in H^1(\Omega;\Real^2)$ and takes values in $\bS$ a.e.~in $\Omega$.
\item Define the linear space $\bK^j := \{\bw \in H^1_0(\Omega;\Real^2)\;|\;
\bw(\bx) \in T_{\bn^j(\bx)}\bS \; \text{a.e. $\bx\in\Omega$}\}$ and solve the constrained minimization problem
\begin{equation} \label{eq:min_step2}
\bw^j := \argmin_{\bw \in \bK^j} \frac12 \int_\Omega \|\nabla (\bn^j-\bw)\|_{\Real^{2\times 2}}^2 \dx.
\end{equation}
This is a well-posed problem since the functional is strongly convex in $H^1_0(\Omega;\Real^2)$ and the minimizing set is a closed convex subset of $H^1_0(\Omega;\Real^2)$. Moreover, since $\bw^j(\bx)\SCAL \bn^j(\bx)=0$ and $\|\bn^j(\bx)\|_{\Real^2}=1$ a.e.~in $\Omega$, we infer that $\bn^j-\bw^j$ takes values in $\Real^2_{\ge1}$ a.e.~in $\Omega$ (observe that $\|\bn^j(\bx)-\bw^j(\bx)\|_{\Real^2}^2 = 1 + \|\bw^j(\bx)\|_{\Real^2}^2 \ge 1$).
\item Set $\bn^{j+1}:= P_{\bS} \circ (\bn^j-\bw^j)$. Owing to Lemma~\ref{lem:PS1}, $\bn^{j+1}\in H^1(\Omega;\Real^2)$ and takes values in $\bS$.
\item Set $\Psi^{j+1}:=\calG \circ \bn^{j+1}$. Owing to Lemma~\ref{lem:compose}(i), $\Psi^{j+1}  \in H^1(\Omega;\Real^4)$ and takes values in $\calN$ a.e.~in $\Omega$. Moreover, for a.e.~$\bx\in\partial\Omega$, we have $\bw^j(\bx)=\bzero$ so that $\bn^{j+1}(\bx)=P_{\bS}(\bn^j(\bx))=\bn^j(\bx)$; hence, $\Psi^{j+1}(\bx) = \calG(\calG^{-1}(\Psi^j(\bx))) = \Psi^j(\bx) = \Psi^b(\bx)$. In conclusion, we have $\Psi^{j+1}\in W$ as claimed.
\end{enumerate}

\begin{theorem}[Energy decay] \label{th:decay_cont}
The sequence $(\Psi^j)_{j=1}^\infty$ satisfies \eqref{eq:decay}.
\end{theorem}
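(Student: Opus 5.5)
The plan is to transfer the energy comparison from $\Psi$ to the unit-vector fields $\bn$ by means of identity~\eqref{eq:norm_psi}. Since $\Psi^j=\calG\circ\bn^j$ with $\bn^j$ taking values in $\bS$, summing \eqref{eq:norm_psi} over $k\in\{1,2\}$ and integrating over $\Omega$ gives
\[
E(\Psi^j)=\frac{4Q_c^2+M_c^2}{2}\int_\Omega \|\nabla\bn^j\|^2_{\Real^{2\times2}}\dx ,
\]
and the same identity holds at iteration $j+1$, because $\Psi^{j+1}=\calG\circ\bn^{j+1}$ with $\bn^{j+1}\in H^1(\Omega;\Real^2)$ taking values in $\bS$ (Lemma~\ref{lem:PS1}). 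The factor $4Q_c^2+M_c^2$ is positive, so \eqref{eq:decay} is equivalent to
\[
\int_\Omega\|\nabla\bn^{j+1}\|^2_{\Real^{2\times2}}\dx\le\int_\Omega\|\nabla\bn^j\|^2_{\Real^{2\times2}}\dx .
\]

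The second step is to compare $\bn^{j+1}$ with $\bn^j-\bw^j$. Step 2 of the algorithm shows that $\bn^j-\bw^j$ takes values in $\Real^2_{\ge1}$ a.e. Lemma~\ref{lem:PS1}(ii) then gives the pointwise bound $\|\nabla\bn^{j+1}\|_{\Real^{2\times2}}=\|\nabla P_{\bS}(\bn^j-\bw^j)\|_{\Real^{2\times2}}\le\|\nabla(\bn^j-\bw^j)\|_{\Real^{2\times2}}$ a.e. in $\Omega$. Integrating this bound yields
\[
\int_\Omega\|\nabla\bn^{j+1}\|^2\dx\le\int_\Omega\|\nabla(\bn^j-\bw^j)\|^2\dx .
\]

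The third step uses the minimality of $\bw^j$ in \eqref{eq:min_step2}. The space $\bK^j$ is linear, so it contains $\bw=\bzero$. Comparing the minimizer $\bw^j$ with this admissible competitor gives
\[
\frac12\int_\Omega\|\nabla(\bn^j-\bw^j)\|^2\dx\le\frac12\int_\Omega\|\nabla\bn^j\|^2\dx .
\]
Chaining this with the second step and the identity from the first step proves the claim.

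Most of the argument is direct. The only point needing care is the pointwise gradient estimate for $P_{\bS}$ composed with an $H^1$ field. I would not reprove it, since Lemma~\ref{lem:PS1}(ii) already provides it; it is the 1-Lipschitz property of $P_{\bS}$ on $\Real^2_{\ge1}$ combined with the chain rule for Lipschitz maps, as in Alouges. I would also state explicitly that $\bn^{j+1}$ lies in $H^1(\Omega;\Real^2)$ with values in $\bS$, because that is what makes \eqref{eq:norm_psi} applicable at iteration $j+1$.
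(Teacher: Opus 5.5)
Your proposal is correct and follows the paper's proof essentially step by step. You use \eqref{eq:norm_psi} to reduce the comparison to $\int_\Omega\|\nabla\bn^m\|^2\dx$ for $m\in\{j,j+1\}$, then Lemma~\ref{lem:PS1}(ii) to bound $\bn^{j+1}$ by $\bn^j-\bw^j$, and finally the admissibility of $\bzero\in\bK^j$ in \eqref{eq:min_step2} to close the chain.
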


\begin{proof}
Owing to~\eqref{eq:norm_psi}, we infer that 
\[
\|\nabla\Psi^{m}\|_{\Real^{4\times 2}}^2
= (4Q_c^2+M_c^2) \|\nabla \bn^m\|_{\Real^{2\times 2}}^2 \quad \forall m\in\{j,j+1\}.
\] 
Moreover, owing to Lemma~\ref{lem:PS1}(ii), we infer that
\[
\int_\Omega \|\nabla \bn^{j+1}\|_{\Real^{2\times2}}^2 \dx \le
\int_\Omega \|\nabla (\bn^{j}-\bw^j)\|_{\Real^{2\times2}}^2 \dx \le
\int_\Omega \|\nabla \bn^{j}\|_{\Real^{2\times2}}^2 \dx,
\]
where the second bound follows from \eqref{eq:min_step2} and the observation that $\bzero \in \bK^j$. Putting everything together gives
\begin{align*}
\int_\Omega \|\nabla\Psi^{j+1}\|_{\Real^{4\times 2}}^2 \dx
&= (4Q_c^2+M_c^2) \|\nabla \bn^{j+1}\|_{\Real^{2\times 2}}^2 \\
&\le (4Q_c^2+M_c^2)  \int_\Omega \|\nabla \bn^{j}\|_{\Real^{2\times2}}^2 \dx 
=\int_\Omega \|\nabla\Psi^{j}\|_{\Real^{4\times 2}}^2 \dx.
\end{align*}
This completes the proof.
\end{proof}

\section{Finite element approximation}
\label{sec:FEM}

\subsection{Discrete setting}

We assume that $\Omega$ is a polygonal domain in $\Real^2$ and consider a triangulation $\calT$ that covers $\Omega$ exactly. The collection of mesh vertices is partitioned as $\vertex:=\vertexi \cup \vertexb$, where $\vertexi$ (resp., $\vertexb$) denotes the collection of interior (resp., boundary) mesh vertices. The standard hat (Courant) basis functions are denoted as $\{\rho_{\ba}\}_{\ba \in\vertex}$ and satisfy the partition-of-unity property $\sum_{\ba\in \vertex} \rho_{\ba} =1$ over $\Omega$. We set
\begin{equation} \label{eq:def_kab}
k_{\ba\bb} := \int_\Omega \nabla \rho_{\ba} \SCAL \nabla \rho_{\bb} \dx
\quad \forall \ba,\bb \in \vertex,
\end{equation}
and observe that $k_{\ba\bb}=k_{\bb\ba}$ as well as 
$\sum_{\ba\in \vertex} k_{\ba\bb}= \sum_{\bb\in \vertex} k_{\ba\bb} = 0$
owing to the partition-of-unity property. 
In what follows, we assume that the triangulation $\calT$ is weakly acute \cite{Xu1999,Qiang:1998:GL,Brandts2009}  so that
\begin{equation} \label{eq:DMP}
k_{\ba\bb} \le 0 \quad \forall \ba,\bb \in \vertex, \; \ba\ne\bb.
\end{equation}
Recall that this condition is satisfied if and only if the sum of the two angles opposite any edge connecting a pair of mesh vertices is less than or equal to $\pi$. This is a reasonable condition in 2D.

To approximate the minimization problem~\eqref{eq:inf}, we consider the discrete minimizing set
\begin{align}
W_\calT := \{ \Phi_\calT \in H^1(\Omega;\Real^4) \;|\; & \Phi_{\calT}|_T \in \polP^1(T;\Real^4), \, \forall T\in\calT \;;\; \nonumber \\
& \Phi_\calT(\ba)\in \calN,\, \forall \ba\in\vertexi;\;; \Phi_\calT(\ba)=\Psi^b(\ba),\,\forall \ba\in\vertexb\},
\end{align}
where $\polP^1(T;\Real^4)$ is composed of divariate, $\Real^4$-valued, affine polynomials restricted to $T\in\calT$. 
We notice that every $\Phi_\calT \in W_\calT$ can be expanded as follows:
\begin{equation} \label{eq:expansion}
\Phi_\calT = \sum_{\ba \in\vertex} \Phi_\calT(\ba) \rho_\ba.
\end{equation}
An important observation is the nonconforming nature of the approximation provided by the discrete minimizing set, i.e., we have $W_\calT \not \subset W$. Indeed, we have $\Phi_\calT(\bx) \in \calN$ if $\bx\in\vertex$, but this is not guaranteed if $\bx\in \Omega \setminus \vertex$.
Altogether, the discrete minimization problem reads as follows:
\begin{equation} \label{eq:disc_inf}
\min_{\Phi_\calT\in W_\calT} E(\Phi_\calT),
\end{equation}
with the (Dirichlet) energy functional still defined in~\eqref{eq:def_E}.

\subsection{Preliminary notation and results}

Recalling \eqref{eq:calG}, given any $\Psi=(\Psi_1,\Psi_2,\Psi_3,\Psi_4)\tr \in \calN$,
we write $\bn:=M_c^{-1}(\Psi_3,\Psi_4)\tr$ and $\bnu:=Q_c^{-1}(\Psi_1,\Psi_2)\tr$,
so that $\bn \in \bS$ and $\bnu=\bR(\bn)\in \bS$ as well. Consistently with this notation,
for all $\Phi_\calT \in W_\calT$ and all $\ba\in\vertex$, we define the two vectors
$\bn(\ba)$ and $\bnu(\ba)$, both in $\bS$ and such that $\bnu(\ba)=\bR(\bn(\ba))$.

\begin{lemma}[Energy evaluation] \label{lem:engy_eval}
The following holds for all $\Phi_\calT \in W_\calT$:
\begin{equation} \label{eq:engy_eval}
\int_\Omega \|\nabla \Phi_\calT\|_{\Real^{4\times2}}^2 \dx = - \frac12
\sum_{\ba,\bb\in\vertex} k_{\ba\bb}\Big\{ \, Q_c^2\|\bnu(\ba)-\bnu(\bb)\|_{\Real^2}^2
+ M_c^2\|\bn(\ba)-\bn(\bb)\|_{\Real^2}^2 \, \Big\}.
\end{equation}
\end{lemma}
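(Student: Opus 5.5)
The plan is to expand $\Phi_\calT$ in the hat basis using \eqref{eq:expansion} and reduce the Dirichlet energy to a quadratic form in the nodal values. Then we use the row-sum property of $k_{\ba\bb}$ to rewrite it as a sum of squared differences. Since the energy splits componentwise, we can treat each of the four scalar components $\Phi_i$, $1\le i\le 4$, separately and sum at the end.

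For a scalar piecewise affine function $v_\calT=\sum_{\ba\in\vertex} v_\ba\rho_\ba$, we first obtain
\[
\int_\Omega \|\nabla v_\calT\|^2_{\Real^2}\dx=\sum_{\ba,\bb\in\vertex}k_{\ba\bb}v_\ba v_\bb
\]
directly from \eqref{eq:def_kab}. Next we use the identity $v_\ba v_\bb=\tfrac12(v_\ba^2+v_\bb^2)-\tfrac12(v_\ba-v_\bb)^2$, together with $\sum_{\bb}k_{\ba\bb}=0$ and $\sum_{\ba}k_{\ba\bb}=0$. These row- and column-sum properties make the terms $\sum_{\ba,\bb}k_{\ba\bb}v_\ba^2$ and $\sum_{\ba,\bb}k_{\ba\bb}v_\bb^2$ vanish. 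This leaves
\[
\int_\Omega\|\nabla v_\calT\|^2_{\Real^2}\dx=-\tfrac12\sum_{\ba,\bb\in\vertex}k_{\ba\bb}(v_\ba-v_\bb)^2.
\]
Note that the diagonal terms $\ba=\bb$ contribute zero here, so the weak acuteness \eqref{eq:DMP} is not needed for the identity itself; it only ensures later that each contribution is nonnegative.

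We then apply this identity to each component $\Phi_i$ and sum over $i=1,\dots,4$. This gives
\[
\int_\Omega\|\nabla\Phi_\calT\|^2_{\Real^{4\times2}}\dx=-\tfrac12\sum_{\ba,\bb\in\vertex}k_{\ba\bb}\,\|\Phi_\calT(\ba)-\Phi_\calT(\bb)\|^2_{\Real^4}.
\]
Finally, we split the $\Real^4$ difference into its first two and last two components. Since every nodal value lies in $\calN$, with boundary nodes included because $\Psi^b$ takes values in $\calN$, the notation of the preceding paragraph applies at every node: $\Phi_\calT(\ba)=(Q_c\bnu(\ba),M_c\bn(\ba))\tr$. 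Hence
\[
\|\Phi_\calT(\ba)-\Phi_\calT(\bb)\|^2_{\Real^4}=Q_c^2\|\bnu(\ba)-\bnu(\bb)\|^2_{\Real^2}+M_c^2\|\bn(\ba)-\bn(\bb)\|^2_{\Real^2},
\]
which yields \eqref{eq:engy_eval}.

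There is no real obstacle in this argument. The only points needing care are the use of the partition-of-unity consequence $\sum_\bb k_{\ba\bb}=0$, and checking that boundary vertices also carry values in $\calN$, so that $\bn(\ba)$ and $\bnu(\ba)$ are defined at every vertex in $\vertex$.
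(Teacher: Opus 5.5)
Your proposal is correct and follows essentially the same route as the paper. You expand in the hat basis to get $\sum_{\ba,\bb} k_{\ba\bb}\Phi_\calT(\ba)\SCAL\Phi_\calT(\bb)$, use the vanishing row and column sums of $k_{\ba\bb}$ to turn this into $-\frac12\sum_{\ba,\bb} k_{\ba\bb}\|\Phi_\calT(\ba)-\Phi_\calT(\bb)\|^2_{\Real^4}$, and split the norm into its $\bnu$ and $\bn$ blocks. Your componentwise polarization identity is only a rewording of the paper's symmetric subtraction step, and your remarks that weak acuteness is not needed for the identity and that boundary nodal values lie in $\calN$ are correct points the paper leaves implicit.
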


\begin{proof}
Using the expansion~\eqref{eq:expansion}, the definition~\eqref{eq:def_kab} of $k_{\ba\bb}$ and the properties $\sum_{\ba\in \vertex} k_{\ba\bb}= \sum_{\bb\in \vertex} k_{\ba\bb} = 0$,
we infer that
\begin{align*}
&\int_\Omega \|\nabla \Phi_\calT\|_{\Real^{4\times2}}^2 \dx = \sum_{\ba,\bb\in\vertex}
k_{\ba\bb} \Phi_\calT(\ba) \SCAL \Phi_\calT(\bb) \\
&=\frac12 \sum_{\ba,\bb\in\vertex}
k_{\ba\bb} \Phi_\calT(\ba) \SCAL (\Phi_\calT(\bb)-\Phi_\calT(\ba)) + 
\frac12 \sum_{\ba,\bb\in\vertex}
k_{\ba\bb} \Phi_\calT(\bb) \SCAL (\Phi_\calT(\ba)-\Phi_\calT(\bb))  \\
&=-\frac12 \sum_{\ba,\bb\in\vertex}
k_{\ba\bb} \|\Phi_\calT(\ba)-\Phi_\calT(\bb)\|_{\Real^4}^2,
\end{align*}
and the claim follows from 
$\|\Phi_\calT(\ba)-\Phi_\calT(\bb)\|_{\Real^4}^2 = Q_c^2\|\bnu(\ba)-\bnu(\bb)\|_{\Real^2}^2 + M_c^2\|\bn(\ba)-\bn(\bb)\|_{\Real^2}^2$.
\end{proof}

Our second preliminary result is related to the geometry of the admissible set $\calN$.
We define the function $\varphi:(-1,1)\rightarrow\Real$ such that
\begin{equation} \label{eq:def_phi}
\varphi(r) := \frac{2r}{1-r^2} \quad \forall r\in(-1,1).
\end{equation}

\begin{lemma}[Geometry of $\calN$] \label{lem:phi}
Let $\bn\in\bS$ and let $\bt\in T_{\bn}\bS$ be the unit vector 
obtained from $\bn$ by a rotation
of $\frac{\pi}{2}$ in $\Real^2$. Set $\bnu:=\bR(\bn)\in\bS$ and let 
$\btau\in T_{\bnu}\bS$ be the unit vector obtained from $\bnu$ by a rotation
of $\frac{\pi}{2}$ in $\Real^2$. The following holds for all $r\in (-1,1)$:
\begin{equation} \label{eq:RPS}
(\bR \circ P_{\bS})(\bn + r \bt) = P_{\bS}(\bnu + \varphi(r)\btau),
\end{equation}
with the function $\varphi:(-1,1)\rightarrow\Real$ defined in~\eqref{eq:def_phi}. 
In other words, we have
\begin{equation} \label{eq:RPinN}
\big(Q_c P_{\bS}(\bnu + \varphi(r)\btau),M_cP_{\bS}(\bn + r \bt)\big)\tr \in \calN.
\end{equation}
\end{lemma}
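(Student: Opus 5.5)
Both sides of \eqref{eq:RPS} lie in $\bS$, so I plan to reduce everything to angles and compare them. I parametrize $\bn=(\cos\theta,\sin\theta)\tr$. The rotation by $\frac{\pi}{2}$ gives $\bt=(-\sin\theta,\cos\theta)\tr$. By the elementary computation recalled after the definition of $\bR$, $\bnu=(\cos 2\theta,\sin 2\theta)\tr$, and then $\btau=(-\sin 2\theta,\cos 2\theta)\tr$.

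\textbf{Left-hand side.} For $r\in(-1,1)$ I write $r=\tan\alpha$ with a unique $\alpha\in(-\frac{\pi}{4},\frac{\pi}{4})$. Then
\[
\bn+r\bt=\frac{1}{\cos\alpha}\big(\cos(\theta+\alpha),\sin(\theta+\alpha)\big)\tr .
\]
Since $\cos\alpha>0$, the projection gives $P_{\bS}(\bn+r\bt)=(\cos(\theta+\alpha),\sin(\theta+\alpha))\tr$. Applying $\bR$ doubles the angle, so the left-hand side equals $(\cos(2\theta+2\alpha),\sin(2\theta+2\alpha))\tr$.

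\textbf{Right-hand side.} By the same computation with $\bnu,\btau$ and angle $2\theta$, for $\varphi(r)=\tan\beta$ with $\beta\in(-\frac{\pi}{2},\frac{\pi}{2})$ we get
\[
P_{\bS}(\bnu+\varphi(r)\btau)=\big(\cos(2\theta+\beta),\sin(2\theta+\beta)\big)\tr .
\]
Again this uses $\cos\beta>0$, which ensures the projection does not flip the sign.

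\textbf{Matching the angles.} It remains to check that $\beta=2\alpha$. The double-angle formula gives $\tan 2\alpha=\frac{2\tan\alpha}{1-\tan^2\alpha}=\frac{2r}{1-r^2}=\varphi(r)$. Since $2\alpha\in(-\frac{\pi}{2},\frac{\pi}{2})$, this is precisely the branch on which $\tan$ is invertible, so $\beta=2\alpha$ and \eqref{eq:RPS} follows.

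\textbf{The membership \eqref{eq:RPinN}.} Set $\bm{m}:=P_{\bS}(\bn+r\bt)\in\bS$. By \eqref{eq:RPS}, $P_{\bS}(\bnu+\varphi(r)\btau)=\bR(\bm{m})$. The pair in \eqref{eq:RPinN} is therefore $(Q_c\bR(\bm{m}),M_c\bm{m})\tr=\calG(\bm{m})$, which belongs to $\calN$ by definition \eqref{equn:manifold}.

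The only delicate point is the branch and sign bookkeeping. The restriction $r\in(-1,1)$ is exactly what keeps $2\alpha$ inside $(-\frac{\pi}{2},\frac{\pi}{2})$. This guarantees that $\varphi(r)$ is finite and that both normalizations involve positive cosines. For $|r|\ge1$ the identity would fail, or $\varphi$ would be undefined.
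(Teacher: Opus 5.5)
Your proof is correct and follows essentially the same route as the paper. The paper writes $P_{\bS}(\bn+r\bt)$ as $(1+r^2)^{-1/2}(1+\sfi r)e^{\sfi\alpha}$ and verifies $z_1^2=z_2$ by matching real and imaginary parts, which is your $r=\tan\alpha$ angle-doubling computation in complex form; your explicit branch argument ($2\alpha\in(-\frac{\pi}{2},\frac{\pi}{2})$, positive cosines) makes visible where $r\in(-1,1)$ is used, which the paper leaves implicit in the sign of $1-r^2$.
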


\begin{proof}
We first observe that $\bn + r \bt \in \Real^2_{\ge1}$ and $\bnu + \varphi(r)\btau \in
\Real^2_{\ge1}$, so that both projections $P_{\bS}(\bn + r \bt)$ and
$P_{\bS}(\bnu + \varphi(r)\btau)$ are well-defined. There are many ways of 
verifying~\eqref{eq:RPS}. We use here a representation by complex numbers. 
We represent $\bn$ as $e^{\sfi\alpha}$ with $\sfi^2=-1$, $\bn+r\bt$ as $(1+\sfi r)e^{\sfi\alpha}$, and $P_{\bS}(\bn+r\bt)$ as $z_1:=(1+r^2)^{-\frac12}(1+\sfi r)e^{\sfi\alpha}$. Similarly,
we represent $\bnu$ by $e^{2\sfi\alpha}$ and $P_{\bS}(\bnu + \varphi(r)\btau)$ by
$z_2:=(1+\varphi(r)^2)^{-\frac12}(1+\sfi \varphi(r))e^{2\sfi\alpha}$. 
Since $|z_1|=|z_2|=1$, the identity~\eqref{eq:RPS} amounts to $z_1^2 = z_2$. We infer that
\begin{align*}
z_1^2 = z_2 & \; \Longleftrightarrow \; \frac{(1+\sfi r)^2}{1+r^2} = \frac{1+\sfi \varphi(r)}{\sqrt{1+\varphi(r)^2}} \\
& \; \Longleftrightarrow \; \frac{1-r^2}{1+r^2} = \frac{1}{\sqrt{1+\varphi(r)^2}} \; \wedge \;  \frac{2r}{1+r^2} = \frac{\varphi(r)}{\sqrt{1+\varphi(r)^2}} \\
& \; \Longleftrightarrow \; \varphi(r) := \frac{2r}{1-r^2}.
\end{align*}
This completes the proof.
\end{proof}

\subsection{Discrete energy-decreasing algorithm}

Given an initial guess $\Psi_\calT^0\in W_\calT$, our goal is to generate a sequence $(\Psi_\calT^j)_{j=1}^\infty$ in $W_\calT$ that is energy-decreasing, i.e.,
\begin{equation} \label{eq:enrg_decr_calT}
E(\Psi_\calT^{j+1}) \le E(\Psi_\calT^{j}).
\end{equation}
At iteration $j\ge0$, given $\Psi_\calT^j\in W_\calT$, 
we write $\Psi_\calT^j(\ba)=(Q_c\bnu^j(\ba),M_c\bn^j(\ba))\tr \in \calN$ with 
$\bnu^j(\ba)=\bR(\bn^j(\ba))$, for all $\ba\in\vertex$, and we let $\bt^j(\ba)$ 
(resp., $\btau^j(\ba)$) be the unit vector obtained from $\bn^j(\ba)$ (resp., 
$\bnu^j(\ba)$) by a rotation of $\frac{\pi}{2}$ in $\Real^2$. Set $\disM:=\Real^{\#\vertex}$
where $\#\vertex$ denotes the cardinality of the set $\vertex$.
We define the closed convex subset 
\begin{equation}\label{equn:aux-space}
\disK := \big\{\, \disr:=(r_{\ba})_{\ba\in\vertex} \in \disM \;|\; r_{\ba} \in [-\gamma,\gamma] \;\forall \ba\in \vertexi; \; r_{\ba}=0 \; \forall \ba \in\vertexb \,\big\}, 
\end{equation}
where $\gamma\in (0,1)$ is a user-defined parameter. Then, we define the (nonconvex) 
functional $F^j : \disK \rightarrow \Real$ such that, for all $\disr:=(r_\ba)_{\ba\in\vertex}$, 
\begin{align}
F^j(\disr) := -\frac12 \sum_{\ba,\bb\in\vertex} k_{\ba\bb} \Big\{ \,
&Q_c^2\|(\bnu^j(\ba)+\varphi(r_{\ba})\btau^j(\ba))-(\bnu^j(\bb)+\varphi(r_{\bb})\btau^j(\bb))\|_{\Real^2}^2 \nonumber \\
+ {} &M_c^2\|(\bn^j(\ba)+r_{\ba}\bt^j(\ba))-(\bn^j(\bb)+r_{\bb}\bt^j(\bb))\|_{\Real^2}^2
\, \Big\}. \label{eq:def_Fj}
\end{align} 
This functional is well-defined since $\gamma\in(0,1)$. 
The algorithm proceeds in two steps:
\begin{enumerate}
\item We solve the minimization problem
\begin{equation}\label{equn: aux-minimization-problem}
\disr^* :=(r^*_{\ba})_{\ba\in\vertex} \in \argmin_{\disr\in \disK} F^j(\disr).
\end{equation}
We refer the reader to Section~\ref{sec:algo} for algorithmic aspects of solving this problem.
\item We set, for all $\ba \in \vertex$,
\begin{equation}\label{equn:coupled-proj}
\bn^{j+1}(\ba):=P_{\bS}(\bn^j(\ba)+r^*_{\ba}\bt^j(\ba)),
\quad
\Psi_\calT^{j+1}(\ba) := \big(Q_c \bR(\bn^{j+1}(\ba)),M_c\bn^{j+1}(\ba)\big)\tr.
\end{equation}
(Observe that $\bn^j(\ba)+r^*_{\ba}\bt^j(\ba)\in \Real^2_{\ge1}$.) 
By construction, $\Psi_\calT^{j+1}(\ba) \in \calN$ for all $\ba \in \vertex$. Moreover, since $r^*_{\ba}=0$ for all $\ba\in\vertexb$, we infer that $\bn^{j+1}(\ba)=\bn^j(\ba)$, so that $\Psi_\calT^{j+1}(\ba) = \Psi_\calT^{j}(\ba) = \Psi^b(\ba)$ for all $\ba \in \vertexb$. In conclusion, we have $\Psi_\calT^{j+1}:=\sum_{\ba\in\vertex} \Psi_\calT^{j+1}(\ba) \rho_{\ba} \in W_\calT$ as claimed.
\end{enumerate}

\begin{theorem}[Energy decay] \label{thm:engy-decr-disc}
Assume that the triangulation is strictly acute so that \eqref{eq:DMP} holds true.
Then the sequence $(\Psi_\calT^j)_{j=1}^\infty$ satisfies~\eqref{eq:enrg_decr_calT}. 
\end{theorem}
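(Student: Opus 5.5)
The plan mirrors the continuous proof of Theorem~\ref{th:decay_cont}, with Lemma~\ref{lem:engy_eval} replacing the identity~\eqref{eq:norm_psi}. The chain of inequalities to establish is
\[
2E(\Psi_\calT^{j+1}) \le F^j(\disr^*) \le F^j(0) = 2E(\Psi_\calT^j).
\]

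\textbf{Equality at $\disr=0$.} At $\disr = 0$ the definition~\eqref{eq:def_Fj} reduces to the right-hand side of~\eqref{eq:engy_eval} evaluated with $\bn^j(\ba)$ and $\bnu^j(\ba)=\bR(\bn^j(\ba))$. Since $\varphi(0)=0$, Lemma~\ref{lem:engy_eval} gives $F^j(0) = \int_\Omega \|\nabla\Psi_\calT^j\|^2_{\Real^{4\times2}}\dx = 2E(\Psi_\calT^j)$.

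\textbf{Middle inequality.} The vector $0$ belongs to $\disK$, so minimality of $\disr^*$ gives $F^j(\disr^*) \le F^j(0)$.

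\textbf{First inequality.} This is the key step. Apply Lemma~\ref{lem:engy_eval} to $\Psi_\calT^{j+1}$. The needed data are $\bn^{j+1}(\ba) = P_{\bS}(\bn^j(\ba)+r^*_\ba\bt^j(\ba))$ and
\[
\bnu^{j+1}(\ba) = \bR(\bn^{j+1}(\ba)) = P_{\bS}\big(\bnu^j(\ba)+\varphi(r^*_\ba)\btau^j(\ba)\big),
\]
where the second identity is exactly Lemma~\ref{lem:phi}, used at each vertex with $r=r^*_\ba\in[-\gamma,\gamma]\subset(-1,1)$. The arguments of both projections lie in $\Real^2_{\ge1}$: for the first this holds because $\bt^j(\ba)\perp\bn^j(\ba)$, and for the second because $\btau^j(\ba)\perp\bnu^j(\ba)$. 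Lemma~\ref{lem:PS1}(i) then gives, for every pair $\ba,\bb$,
\begin{align*}
\|\bn^{j+1}(\ba)-\bn^{j+1}(\bb)\|_{\Real^2} &\le \|(\bn^j(\ba)+r^*_\ba\bt^j(\ba))-(\bn^j(\bb)+r^*_\bb\bt^j(\bb))\|_{\Real^2},\\
\|\bnu^{j+1}(\ba)-\bnu^{j+1}(\bb)\|_{\Real^2} &\le \|(\bnu^j(\ba)+\varphi(r^*_\ba)\btau^j(\ba))-(\bnu^j(\bb)+\varphi(r^*_\bb)\btau^j(\bb))\|_{\Real^2}.
\end{align*}
By~\eqref{eq:DMP}, the weights $-\frac12 k_{\ba\bb}$ are nonnegative for $\ba\ne\bb$, and the diagonal terms vanish. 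Summing the two inequalities with these weights therefore yields $2E(\Psi_\calT^{j+1}) \le F^j(\disr^*)$, which closes the chain.

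\textbf{Main obstacle.} The delicate point is this first inequality, specifically the $\bnu$-part. Term-by-term domination in the sum requires that the nodal values $\bnu^{j+1}(\ba)$ arise as projections of the very vectors appearing inside $F^j$. This is precisely what Lemma~\ref{lem:phi} provides through the $\varphi$-reparametrization. The positivity of the weights coming from the (weakly) acute mesh is the other essential ingredient.
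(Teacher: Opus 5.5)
Your proof is correct and follows the paper's argument. You use Lemma~\ref{lem:phi} to write $\bnu^{j+1}(\ba)$ as a projection, then Lemma~\ref{lem:engy_eval} with Lemma~\ref{lem:PS1}(i) and the sign condition~\eqref{eq:DMP} to get $2E(\Psi_\calT^{j+1})\le F^j(\disr^*)$, and conclude from minimality and $F^j(0)=2E(\Psi_\calT^j)$; along the way you spell out details the paper leaves implicit (vanishing diagonal terms, arguments lying in $\Real^2_{\ge1}$, and $0\in\disK$).
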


\begin{proof}
Owing to Lemma~\ref{lem:phi}, we infer that
\[
\bnu^{j+1}(\ba):= \bR(\bn^{j+1}(\ba)) = (\bR \circ P_{\bS})(\bn^j(\ba)+r^*_{\ba}\bt^j(\ba))
=P_{\bS}(\bnu^j(\ba)+\varphi(r^*_{\ba})\btau^j(\ba)).
\]
Invoking Lemma~\ref{lem:engy_eval} gives
\begin{align*}
\int_\Omega &\|\nabla \Psi_\calT^{j+1}\|_{\Real^{4\times2}}^2 \dx \\= {}& - \frac12
\sum_{\ba,\bb\in\vertex} k_{\ba\bb}\Big\{ \, Q_c^2\|\bnu^{j+1}(\ba)-\bnu^{j+1}(\bb)\|_{\Real^2}^2
+ M_c^2\|\bn^{j+1}(\ba)-\bn^{j+1}(\bb)\|_{\Real^2}^2 \, \Big\} \\
={}&  - \frac12
\sum_{\ba,\bb\in\vertex} k_{\ba\bb}\Big\{ \, Q_c^2\|P_{\bS}(\bnu^j(\ba)+\varphi(r^*_{\ba})\btau^j(\ba)) - P_{\bS}(\bnu^j(\bb)+\varphi(r^*_{\bb})\btau^j(\bb))\|_{\Real^2}^2 \\
&+ M_c^2 \|P_{\bS}(\bn^j(\ba)+r^*_{\ba}\bt^j(\ba))-P_{\bS}(\bn^j(\bb)+r^*_{\bb}\bt^j(\bb))\|_{\Real^2}^2 \, \Big\}.
\end{align*}
Invoking Lemma~\ref{lem:PS1} and crucially that $k_{\ba\bb}\le 0$ (see~\eqref{eq:DMP}),
we infer that
\begin{align*}
\int_\Omega &\|\nabla \Psi_\calT^{j+1}\|_{\Real^{4\times2}}^2 \dx \\
\le{}&  - \frac12
\sum_{\ba,\bb\in\vertex} k_{\ba\bb}\Big\{ \, Q_c^2\|(\bnu^j(\ba)+\varphi(r^*_{\ba})\btau^j(\ba)) - (\bnu^j(\bb)+\varphi(r^*_{\bb})\btau^j(\bb))\|_{\Real^2}^2 \\
&+ M_c^2 \|(\bn^j(\ba)+r^*_{\ba}\bt^j(\ba))-(\bn^j(\bb)+r^*_{\bb}\bt^j(\bb))\|_{\Real^2}^2 \, \Big\}
\end{align*}
Recalling~\eqref{eq:def_Fj}, this proves that
\[
\int_\Omega \|\nabla \Psi_\calT^{j+1}\|_{\Real^{4\times2}}^2 \dx \le F^j(\disr^*).
\]
Observing that $\int_\Omega \|\nabla \Psi_\calT^{j}\|_{\Real^{4\times2}}^2 \dx = F^j(0)$,
we conclude that
\[
\int_\Omega \|\nabla \Psi_\calT^{j+1}\|_{\Real^{4\times2}}^2 \dx \le F^j(\disr^*)
\le F^j(0) = \int_\Omega \|\nabla \Psi_\calT^{j}\|_{\Real^{4\times2}}^2.
\]
This proves the claim.
\end{proof}

\section{Algorithmic aspects}
\label{sec:algo}

In this section, we provide some details on the computational algorithm for solving
the minimization problem~\eqref{equn: aux-minimization-problem}, followed by an
overall description of the energy-decreasing iterative procedure. 

\subsection{Decomposition-coordination principle using augmented Lagrangian}

To tackle the minimization problem~\eqref{equn: aux-minimization-problem}, we 
employ a decomposition-coordination strategy based on an augmented Lagrangian framework.
The idea is to introduce the auxiliary variable $p_{\ba}=\varphi(r_\ba)$ 
for all $\ba\in\vertex$, which we rewrite in more compact form as 
$\disp=\varphi(\disr)\in \disM$. The idea is then to consider a minimization problem
in the two variables $\disr:=(r_\ba)_{\ba\in\vertex}\in \disK $ and 
$\disp:=(p_\ba)_{\ba\in\vertex}\in \disM$ (decomposition step), 
and to enforce that $\disp=\varphi(\disr)$
by means of a Lagrange multiplier $\disLamb=(\lambda_\ba)_{\ba\in\vertex}\in \disM$
and an augmented Lagrangian (coordination step). 

In the first step, recalling the cost functional $F^j$ in \eqref{eq:def_Fj}, we set
\begin{subequations} \begin{align}
F^j_1(\disr) &:=-\frac12 \sum_{\ba,\bb\in\vertex} k_{\ba\bb} M_c^2\|(\bn^j(\ba)+r_{\ba}\bt^j(\ba))-(\bn^j(\bb)+r_{\bb}\bt^j(\bb))\|_{\Real^2}^2,\\
F^j_2(\disp) &:= -\frac12 \sum_{\ba,\bb\in\vertex} k_{\ba \bb}  
Q_c^2\|(\bnu^j(\ba)+p_{\ba}\btau^j(\ba))-(\bnu^j(\bb)+p_{\bb}\btau^j(\bb))\|_{\Real^2}^2,
\end{align} \end{subequations} 
so that $F^j_1(\disr)+F^j_2(\disp)=F^j(\disr)$ whenever $\disp=\varphi(\disr)$. 
For later use, we record the expressions for the gradients of the above two functionals: For all $\ba \in\vertex$,
\begin{subequations}
\begin{align}
\nabla_{r_{\ba}} F^j_1(\distr) &= -2 \sum_{\bb\in\vertex} k_{\ba\bb} M_c^2 \big({\tilde r}_{\ba} -{\tilde r}_{\bb} \bt^j(\ba)\cdot \bt^j(\bb)-\bt^j(\ba)\cdot \bn^j(\bb) \big), \label{eq:grad_F1} \\
\nabla_{p_{\ba}} F^j_2(\distp) &= -2 \sum_{\bb\in\vertex} k_{\ba\bb} Q_c^2 \big({\tilde p}_{\ba} -{\tilde p}_{\bb} \btau^j(\ba)\cdot \btau^j(\bb)-\btau^j(\ba)\cdot \bnu^j(\bb) \big). \label{eq:grad_F2}
\end{align}
\end{subequations}
Then the decomposed version of the minimization problem~\eqref{equn: aux-minimization-problem} reads as
\begin{align}\label{equn: decomposed-prob}
& \min_{\substack{(\disr,\disp)\in {\disK} \times {\disM} \\ \disp=\varphi(\disr)}}
\Big\{ F^j_1(\disr) + F^j_2(\disp) \Big\}.
\end{align}
In the second step, the constraint in \eqref{equn: decomposed-prob} is enforced using a Lagrange multiplier $\disLamb\in \disM$. We define the Lagrangian on $\disK\times \disM \times \disM$ such that
\begin{equation}\label{saddle-formulation}
{L}^j(\disr,\disp,\disLamb)=F^j_1(\disr) + F^j_2(\disp) + \disLamb\tr D (\disp-\varphi(\disr)).
\end{equation}
Here, $D$ is a diagonal matrix with entries $(d_\ba)_{\ba\in\vertex}$ that is expected to scale as the entries $k_{\ba\bb}$ of the stiffness matrix so as to balance dimensionally all the terms on the above right-hand side. In the present two-dimensional setting, a reasonable choice is $d_\ba:=h^{d-2}=1$ for all $\ba\in\vertex$, where $h$ denotes the mesh size (the largest element diameter in $\calT$). 
We work with an augmented Lagrangian formulation. Given the augmentation parameter $\zeta>0$, we set
\begin{align}\label{equn:cost-nonlinear-ccoupling}
{L}^j_\zeta(\disr,\disp,\disLamb)=&F^j_1(\disr) + F^j_2(\disp) +  \disLamb\tr  D (\disp-\varphi(\disr)) +\frac{\zeta}{2} (\disp-\varphi(\disr))\tr D (\disp-\varphi(\disr)).
\end{align}
Set $\disu:=(\disr,\disp)\in \disK \times \disM.$ A couple $(\disu,\disLamb)$ is a saddle-point of the augmented Lagrangian ${L}^j_\zeta$ if and only if
\begin{align}\label{equn:saddle-point}
{L}^j_\zeta(\disu,\theta_\calT) \leq {L}^j_\zeta(\disu,\disLamb) \leq  {L}^j_\zeta(v_\calT,\disLamb)  \quad \forall \theta_\calT \in \disM,  \forall v_\calT \in \disK \times \disM.
\end{align}
Our goal is now to find a saddle-point of $L^j_\zeta$. We do so by using an alternating-direction Uzawa-like iterative method that we describe next.

\subsection{Alternating-direction Uzawa-like iterations}

We find a saddle-point of $L^j_\zeta$ by means of the Uzawa/Alternating Direction Method of Multipliers (ADMM)  \cite{Arrow:Hurwicz:Uzawa:1958,Fortin:Glowinski:1983}. 
The iterative method is initialized by specifying $\disp^0\in\disM$ and $\disLamb^0\in\disM$
(see Remark~\ref{rem:init} below). Then, at iteration $n\ge0$, assuming $\disp^n,\disLamb^n$ known from the initialization or the previous iteration, we calculate $\disr^{n+1}\in\disK,$ then $\disp^{n+1}\in\disM$ and finally $\disLamb^{n+1}\in\disM$ in the following three steps:
\begin{subequations} \label{eq:Uzawa} \begin{align}
\disr^{n+1} &\leftarrow \text{arg}\min_{\disr\in \disK}{L}^j_\zeta (\disr,\disp^n, \disLamb^n), \label{equn:opt-r}\\
\disp^{n+1} &\leftarrow \text{arg}\min_{\disp\in \disM}{L}^j_\zeta (\disr^{n+1},\disp, \disLamb^n), \label{equn:opt-p}\\
\disLamb^{n+1} &\leftarrow \disLamb^{n}+\rho  \nabla_\lambda{L}^j_\zeta(\disr^{n+1},\disp^{n+1}, \disLamb), \label{equn:opt-LM}
\end{align}\end{subequations}
where $\rho>0$ is a user-dependent parameter. 

Let us briefly describe the three steps in~\eqref{eq:Uzawa}, starting with the second step~\eqref{equn:opt-p} which simply amounts to a linear solve: Find $\disp^{n+1}\in\disM$ so that
\begin{equation} \label{eq:linsys-p}
\nabla_{p_{\ba}}  {L}^j_\zeta(\disr^{n+1}, \disp^{n+1}, \disLamb^{n})=\nabla_{p_{\ba}}  F_2^j(\disp^{n+1})   + d_{\ba} \lambda_{\ba}^{n} +\zeta d_{\ba} (p_{\ba}^{n+1} - \varphi(r_{\ba}^{n+1}))=0, \quad \forall\ba\in \vertex,
\end{equation}       
where the gradient of $F_2^j$ is given by~\eqref{eq:grad_F2}.
The third step, \eqref{equn:opt-LM}, in turn amounts to a simple componentwise 
update on the Lagrange multiplier by setting
\begin{equation} \label{eq:update-LM}
\lambda_\ba^{n+1} = \lambda_\ba^{n} + \rho d_\ba (p_\ba^{n+1}- \varphi(r_\ba^{n+1})), \quad \forall \ba\in \vertex.
\end{equation}
Finally, the first step, \eqref{equn:opt-r}, is more intricate owing to the nonlinearity produced by the function $\varphi$. The idea to enhance computational efficiency is then to replace, only in the first step~\eqref{equn:opt-r}, the nonlinear function $\varphi$ by its linearization at $\disr^{n}$ given by $\varphi_{\textsc{l}}^n(\disr):=\varphi(\disr^{n})+\varphi'(\disr^{n})(\disr-\disr^{n})$. With the same goal in mind, we also relax the constraint of seeking a minimizer in $\disK$, and just seek a minimizer in $\disM$ (this is possible since the linearized function $\varphi_{\textsc{l}}$ is defined over the whole real line). Our numerical experiments below indicate that the obtained minimizer anyway lies in $\disK$ (see Remark~\ref{rem:domain} below for further discussion). Altogether, the first step~\eqref{equn:opt-r} is replaced by 
\begin{equation} \label{eq:step1_linLag}
\disr^{n+1} \leftarrow \text{arg}\min_{\disr\in \disM}{L}^{j,n}_{\textsc{l},\zeta} (\disr,\disp^n, \disLamb^n),
\end{equation}
with the linearized augmented Lagrangian 

\begin{equation}
{L}^{j,n}_{\textsc{l},\zeta}(\disr,\disp,\disLamb)=F^j_1(\disr) + F^j_2(\disp) +  \disLamb\tr  D (\disp-\varphi_{\textsc{l}}^n(\disr)) +\frac{\zeta}{2} (\disp-\varphi_{\textsc{l}}^n(\disr))\tr D (\disp-\varphi_{\textsc{l}}^n(\disr)).
\end{equation}
A straightforward calculation shows that~\eqref{eq:step1_linLag} amounts to the following linear solve: Find $\disr^{n+1}\in\disM$ so that, for all $\ba\in \vertex$,
\begin{align}\label{gradL:r}
\nabla_{r_{\ba}}{L}^{j,n}_{\textsc{l},\zeta}(\disr^{n+1}, \disp^n, \disLamb^n)\notag = {}& \nabla_{r_{\ba}}F^j_1(\disr^{n+1})  - d_{\ba}\lambda_{\ba}^n \varphi'(r_{\ba}^n) \\&-\zeta d_{\ba} \big(p_{\ba}^n -\varphi(r_{\ba}^n) -\varphi'(r_{\ba}^n)(r^{n+1}_{\ba}-r_{\ba}^{n})\big)\varphi'(r_{\ba}^n) = 0, 
\end{align}
where the gradient of $F^j_1$ is given by~\eqref{eq:grad_F1}. The linearization in \eqref{eq:step1_linLag} is consistent since  ${L}^{j,n}_{\textsc{l},\zeta} \rightarrow {L}^{j}_{\zeta}$ in the limit $n \rightarrow \infty.$
We emphasize that the nonlinear function $\varphi$ is still used in~\eqref{equn:opt-p} and in~\eqref{equn:opt-LM}. In particular, if the Uzawa-like iteration converges, \eqref{eq:update-LM} guarantees that $p_\ba^{n+1} = \varphi(r_\ba^{n+1})$, which is the critical condition required to ensure an energy decrease. We notice in passing that one consequence of the above linearization is the need to provide a value for the initial primal variable, $\disr^0$.

\begin{remark}[Initialization] \label{rem:init}
We initialize the Uzawa/ADMM iterations in a nested fashion by exploiting the outer energy-decreasing loop (index $j$). At iteration $j$, we set $\disr^0:=\disr^{j-1,*}$, $\disp^0:=\disp^{j-1,*}$ and $\disLamb^0:=\disLamb^{j-1,*}$, where the right-hand sides are the values obtained at convergence of the Uzawa/ADMM algorithm at the previous iteration $(j-1)$. For $j=0$, we simply set $\disr^0:=0$, $\disp^0:=0$ and $\disLamb^0:=0$.
\end{remark}

\begin{remark}[Domain of $\disr$] \label{rem:domain}
The above alternating-direction Uzawa-like iteration does not guarantee that the discrete variable $\disr$ lies within the admissible interval $(-1,1)$. It turns out to be the case in all our numerical experiments. We notice that the variable $\disr$ is related to the step-size in the energy-decreasing algorithm, so that it is expected to tend to zero if convergence does indeed occur. Therefore, possible difficulties with the domain of $\disr$ are to be expected, if any, at the first iterations. It is possible to add an energy barrier to the Lagrangian so as to prevent $\disr$ from stepping out of its admissible domain. One possibility is to consider the Lagrangian (we omit the augmentation term for brevity)
\[
{L}^j(\disr,\disp,\disLamb)=F^j_1(\disr) + F^j_2(\disp) + \disLamb\tr  D (\disp-\varphi(\disr))  +  \sum_{\ba\in  \vertex} s_\ba \log\abs{1-r_\ba^2},
\] 
where $(s_\ba)_{\ba\in \vertex}$ is a collection of user-dependent weights. As the introduction of the energy barrier in the Lagrangian makes the first step, \eqref{equn:opt-r}, of the Uzawa-like algorithm nonlinear, the linearization of the coupling function $\varphi$ is no longer relevant. The nonlinear minimization problem to evaluate $\disr^{n+1}$ can be implemented using a gradient-descent iterative scheme combined, e.g., with Armijo's rule for line-search at each step. Details are omitted for brevity as this algorithm is classical. 
\end{remark}

\subsection{Overall computational algorithm}
The pseudocode in Algorithm \ref{alg:energy_min_uzawa}
describes the computational workflow. 
The method employs a nested iterative structure, where the outer loop (index $j$) performs energy-descent updates, whereas the inner loop (index $n$) solves the associated constrained minimization problem using the alternating-direction Uzawa-like iteration described in the previous section. The stopping criterion for the outer energy-decreasing loop is
\begin{equation} \label{eq:stop-descent}
-\varepsilon_0 \varepsilon_{\text{outer}}\leq E(\Psi^{j}_\calT)-E(\Psi^{j+1}_\calT) \leq  \varepsilon_{\text{outer}}
\end{equation} 
with tolerance $\varepsilon_{\text{outer}}:= 10^{-6}$. We allow for a slightly negative lower bound in~\eqref{eq:stop-descent} with $\varepsilon_0:=10^{-3}$ in order to avoid pursuing further iterations when the energy essentially stagnates.
The stopping criterion for the inner Uzawa/ADMM loop is based on the  satisfaction of the coupling condition $\disp=\varphi(\disr)$ and reads as follows: 
\begin{align}\label{equn: uzawa-tol}
\Bigg(\frac{1}{\#\vertex}\sum_{\ba\in  \vertex} |\varphi(r_\ba^{n+1}) - p_\ba^{n+1}|^2 \Bigg)^{\frac12}\leq \varepsilon_{\text{pri}}.
\end{align}
The tolerance $\varepsilon_{\text{pri}}$ can vary across the numerical experiments and is specified in Section~\ref{sec:results}. 
All the linear systems arising in the Uzawa/ADMM loop are solved by a direct method using QR decomposition.

\begin{algorithm}[H]
\caption{Energy-descent with embedded Uzawa-like algorithm}
\label{alg:energy_min_uzawa}
\KwIn{Initial approximation \(\Psi_\calT^{0} \in W_\calT \) on triangulation \(\calT\)}

\For{\(j=0,1,2,\ldots \)}{
{\bf Step 1:} {Given \(\Psi_\calT^{j} \in W_\calT\),
solve \eqref{equn: aux-minimization-problem} for
\(\disr^* \in \disK \) using Uzawa/ADMM};

{\bf Initialize Uzawa variables:} \( \disr^0:=\disr^{j-1}\), \( \disp^0:=\disp^{j-1}\), and \(\disLamb^0:=\disLamb^{j-1}\) if \(j\ge1\) or \(\disr^0:=0\), \(\disp^0:=0\), \(\disLamb^0:=0\) otherwise  \;

\For{\(n =0,1, \ldots \)}{
Step 1.1 (Primal Variable Update). Compute $\disr^{n+1}$ from \eqref{eq:step1_linLag};

Step 1.2 (Auxiliary Variable Update). Compute $\disp^{n+1}$ from \eqref{eq:linsys-p};

Step 1.3 (Lagrange multiplier Update). Compute $\disLamb^{n+1}$ from \eqref{eq:update-LM};

\If{\eqref{equn: uzawa-tol} holds}{
\textbf{break} \tcp*{Terminate Uzawa/ADMM iterations}
}
}

{\bf Step 2:} For all $\ba \in \vertex$, compute
$\Psi_\calT^{j+1}(\ba)$ as in  \eqref{equn:coupled-proj}.

\If{\eqref{eq:stop-descent} holds}{\textbf{break} \tcp*{Terminate outer iterations}
}
}
\KwOut{Discrete approximation \(\Psi_\calT^{j+1}\) and discrete energy \({E}(\Psi_\calT^{j+1})\)}\end{algorithm}

\section{Numerical results}
\label{sec:results}

This section presents computational experiments on two test cases with known analytical solution. The goal is to illustrate the performances of the present methodology in terms of efficiency and accuracy. 
In all cases, we consider the square domain $\Omega:=(0,1)^2$ and discretize it using four successively refined, unstructured triangulations of size $h\in \{0.1270,0.0635,0.0318,0.0159\}$ (the diameter of the largest triangle in each triangulation). These triangulations are, respectively, called $\mathcal{T}_1$, $\mathcal{T}_2$, $\mathcal{T}_3$, and $\mathcal{T}_4$. These triangulations are all weakly acute so that~\eqref{eq:DMP} always holds true. The finest triangulation, $\mathcal{T}_4$, is only considered for the second test case which is more challenging.

The ferronematic parameters are set to
$Q_c:=\Big(c/4+\chi\Big)^{\frac13}+\Big(c/4-\chi\Big)^{\frac13}$, $\chi:=(c^2/16-(1+c^2/2)^3/27)^{\frac12}$, and $M_c:=(1+cQ_c)^{\frac12}$ (see \cite{Dalby2022}) with $c:=0.005$, giving $Q_c\approx 1.0013$ and $M_c\approx 1.0025$. 
To define the analytical solution, we consider the map $\Phi:\Real^2 \times \Real^2 \rightarrow \Real^{4}$ such that $\Phi(\bX;\bx)=\calG(\widetilde{\bn}(\bX;\bx))$ with $\widetilde{\bn}(\bX;\bx):=-\frac{(\bx-\mathbf{X})^\perp}{\|\bx-\bX \|_{\Real^2}}$ for all $\bx\neq \bX.$ Choosing $\bx_0 \not\in \Omega$ leads to the (smooth) analytical solution $\Psi_{\bx_0}(\bx):=\Phi(\bx_0,\bx)$ for all $\bx\in \Omega$. In our numerical experiments, the analytical solution is only used to define the boundary datum $\Psi^b$ (and to compute the errors). We denote by $\Psi^{\textrm{cv}}_{\mathcal{T}_i}$ the converged solution on the mesh $\mathcal{T}_i$, $i\in\{1,2,3,4\}$.

\subsection{Example 1 }\label{example1}
 
We define the boundary datum $\Psi^b$ using $\bx_0=(2,0.2)\tr$. Figure \ref{figlin:soln-nM-coupling-deg1} displays the two components $\Qvec$ and $\Mvec$ of the exact solution $\Psi_{\bx_0}$. For reference, we have $E(\Psi_{\bx_0}) \approx 1.15113.$ 

To initialize the energy-decreasing iterations, we set $\Psi^0(\bx):=\Phi(\bY(\bx);\bx)$ for all $\bx\in \Omega$, with $\bY(\bx)=(1-\delta(\bx))\bx_0 + \delta(\bx)\bx_1$ for $\bx_1 \not\in \Omega$ and $\delta(\bx):=\rho_\Omega^{-1}\text{dist}(\bx,\partial \Omega)$ where $\rho_\Omega:=\max_{\bx\in \Omega}\text{dist}(\bx, \partial \Omega)$. Notice that $\bY(\bx)=\bx_0$ for all $\bx\in\partial\Omega$ so that $\Psi^0|_{\partial\Omega} = \Psi^b$. We set $\bx_1=(1.5,1.5)^T$.

\begin{figure}[htb]\centering
\includegraphics[width=0.35\linewidth]
{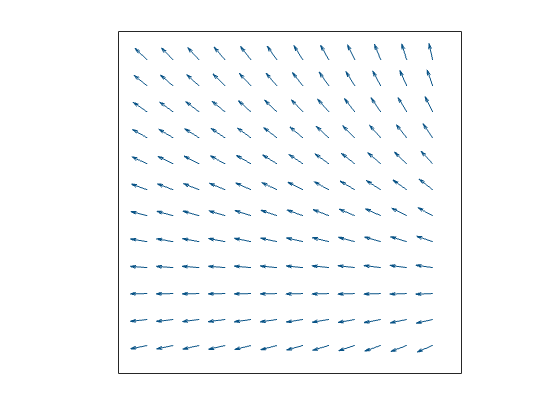}
\includegraphics[width=0.35\linewidth]
{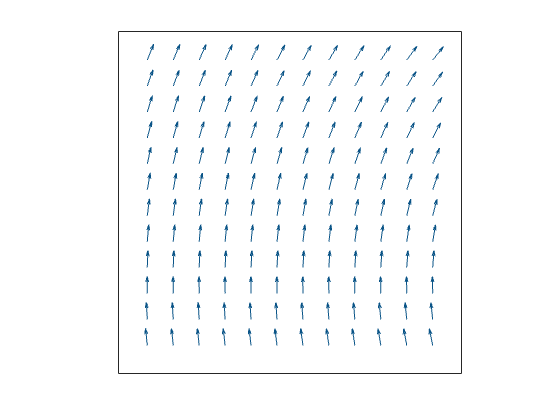}
\caption{Nematic profile $\Qvec$ (left) and magnetic profile $\Mvec$  (right) for the exact solution $\Psi_{\bx_0}$ with $\bx_0=(2,0.2)\tr$; the domain is $\Omega:=(0,1)^2$.}
\label{figlin:soln-nM-coupling-deg1}
\end{figure}

\begin{table}[htb]
\centering
\begin{tabular}{|c|ccc|ccc|ccc|}
\hline
&\multicolumn{3}{|c|}{$\mathcal{T}_1 \,\, (\times 10^{-3})$}&\multicolumn{3}{|c|}{$\mathcal{T}_2\,\, (\times 10^{-3})$} &\multicolumn{3}{|c|}{$\mathcal{T}_3\,\, (\times 10^{-4})$}\\ \hline
$\zeta\, \backslash\,\rho $ & $1$ &  $0.5$ &  $0.25$ & $1$ &  $0.5$ &  $0.25$ & $1$ &  $0.5$ &  $0.25$\\
\hline
&\multicolumn{9}{|c|}{$\varepsilon_{\text{pri}}=10^{-6}$}\\ \hline
$1$ &9.44 &  2.74 &   1.01&0.47 & 0.33 & 0.20  &0.41& 0.37 & 0.31\\
$4$ &0.47 &  0.40 &   0.38& 0.10 & 0.10 & 0.09 &0.64& 0.60 & 0.25\\
$16$ & 0.37 & 0.37 & 0.37 & 0.09 & 0.09 & 0.09 &0.93& 0.24 & 0.24\\
\hline
&\multicolumn{9}{|c|}{$\varepsilon_{\text{pri}}=10^{-7}$}\\ \hline
1& 9.44 & 2.74 & 1.02& 0.47 & 0.32 & 0.20 &0.30 & 0.30 & 0.30  \\
4& 0.47 & 0.40 & 0.38& 0.10 & 0.10 & 0.09 &0.30 & 0.27 & 0.26\\
16& 0.37 & 0.37&0.37 & 0.10 & 0.09 & 0.09 &0.57 & 0.28 & 0.29 \\
\hline  
&\multicolumn{9}{|c|}{$\varepsilon_{\text{pri}}=10^{-8}$}\\
\hline
1& 9.44 & 2.74 & 1.02 &0.47 &0.32 & 0.20 &0.30 & 0.30 & 0.29  \\
4& 0.47 & 0.40 & 0.38 &0.10 &0.10 & 0.09 & 0.24 & 0.24 & 0.24   \\
16& 0.37 & 0.37 & 0.37&0.09 &0.09 & 0.09 & 0.26 & 0.28 & 0.24  \\
\hline
\end{tabular}
\caption{Final energy error $\Delta E_{i} := E(\Psi^{\rm cv}_{\mathcal{T}_i}) - E(\Psi_{\bx_0})$ on the first three triangulations, $i\in\{1,2,3\}$, and for various choices of the Uzawa/ADMM parameters: $\zeta\in\{1,4,16\}$, $\rho\in\{1,0.5,0.25\}$, $\varepsilon_{\text{pri}} \in \{10^{-6},10^{-7},10^{-8}\}$.}
\label{Tab:en-err-Ex1}
\end{table}

We first study how the three parameters driving the Uzawa/ADMM iterations, namely the augmentation parameter $\zeta,$ the update parameter $\rho$, and the Uzawa/ADMM tolerance $\varepsilon_{\text{pri}}$, influence the convergence behavior and the accuracy of the overall algorithm.
Table \ref{Tab:en-err-Ex1} reports the errors in the predicted energy value at convergence of the overall algorithm, $\Delta E_{i} := E(\Psi^{\rm cv}_{\mathcal{T}_i}) - E(\Psi_{\bx_0})$. Our numerical experiments indicate that $\Delta E_i$ always takes positive values. This is not guaranteed a priori as the discrete minimizing set is not a subset of the exact one. To drive the Uzawa/ADMM iterations, we consider the values 
$\zeta\in\{1,4,16\}$, $\rho\in\{1,0.5,0.25\}$, $\varepsilon_{\text{pri}} \in \{10^{-6},10^{-7},10^{-8}\}$, and the first three triangulations $\mathcal{T}_1$, $\mathcal{T}_2$, and $\mathcal{T}_3$.
The following comments can be made from the results reported in Table \ref{Tab:en-err-Ex1}. Comparing the three meshes, the final energy error $\Delta E_i$ is reduced as the mesh is refined, confirming that discretization errors are under control. Focusing now on the Uzawa/ADMM tolerance, we observe that, on the finest mesh, $\mathcal{T}_3$, the final energy error $\Delta E_3$ takes close values for the two tighter tolerances, $\varepsilon_{\text{pri}} \in \{10^{-7},10^{-8}\}$, whereas it is larger when $\varepsilon_{\text{pri}}=10^{-6}$. This suggests to choose $\varepsilon_{\text{pri}} \le 10^{-7}$ when using $\mathcal{T}_3$. Instead, on $\mathcal{T}_1$ and $\mathcal{T}_2$, the final energy error $\Delta E_i$, $i\in\{1,2\}$, is the same for the three considered values of $\varepsilon_{\text{pri}}$, suggesting to choose $\varepsilon_{\text{pri}} \le 10^{-6}$ for the two coarser meshes. Finally, when reaching low values for the final energy error, the choice of the parameters $\zeta$ and $\rho$ is not that significant as far as accuracy is concerned. Instead, when the error is larger, as on $\mathcal{T}_1$, larger values for $\zeta$ are to be preferred to achieve tighter errors.

\begin{table}[htb]
\centering
\begin{tabular}{|c|ccc|ccc|ccc|}
\hline
&\multicolumn{3}{|c|}{$\mathcal{T}_1$}&\multicolumn{3}{|c|}{$\mathcal{T}_2$}&\multicolumn{3}{|c|}{$\mathcal{T}_3$}\\ \hline
$\zeta\, \backslash\,\rho $ & $1$ &  $0.5$ &  $0.25$ & $1$ &  $0.5$ &  $0.25$ & $1$ &  $0.5$ &  $0.25$ \\
\hline
&\multicolumn{9}{|c|}{$\varepsilon_{\text{pri}}=10^{-6}$}\\\hline
$1$   & 3(1)  &   6(2)   &  3(1)& 4  &   8  &   5(1) & 13   &  8  &9   \\
$4$   &4(1)   &   4      &   2  & 7  &   5  &   5 & 47(1)   &  0  &61(10)  \\
$16$  &6      &   3      &   3  & 60 (9) &  43(10)  &   30(8) & 347(27)  &  397(42) & 230(33)  \\      
\hline
&\multicolumn{9}{|c|}{$\varepsilon_{\text{pri}}=10^{-7}$}\\
\hline
$1$& 2  &   2   &  2 & 2  &   4 &    2  &  8    & 4  &   3   \\
$4$& 2  &   4   &  2 &2   &  2  &   2&  12   &  8  &   7   \\
$16$&2  &   2   &  2 & 6   &  6   &  5& 39  &   64(7)  &   29(2)\\
\hline  
&\multicolumn{9}{|c|}{$\varepsilon_{\text{pri}}=10^{-8}$}\\
\hline
$1$& 3  &   3   &  2 & 2  &   3 &    2& 2  &   2 &    2\\
$4$&3   &  2  &   3 & 2   &  3    & 2 & 4   &  4    & 3\\
$16$&3  &   2   &  2 & 3   &  2   &  2 &12   &  6   &  8 \\
\hline
\end{tabular}
\caption{Total number of outer iterations (with total number of oscillations in parentheses, only whenever nonzero) on the three triangulations  and for various choices of the Uzawa/ADMM parameters: $\zeta\in\{1,4,16\}$, $\rho\in\{1,0.5,0.25\}$, $\varepsilon_{\text{pri}} \in \{10^{-6},10^{-7},10^{-8}\}$.}
\label{Tab:energ-osc}
\end{table}

Another way of looking at how an insufficient resolution in the Uzawa/ADMM iterations affects the behavior of the outer iterations is to check whether the outer loop does indeed achieve an energy decrease at each iteration $j$. 
Non-optimal choices of the parameters $\zeta, \rho,$ and $\varepsilon_{\text{pri}}$ may induce oscillations in the energy decay, as quantified in Table \ref{Tab:energ-osc}. 
Some oscillations in the energy decay are observed for $\varepsilon_{\text{pri}}=10^{-6}$ on $\calT_1$ and on $\calT_2$, and for $\varepsilon_{\text{pri}}\in\{10^{-6},10^{-7}\}$ on $\calT_3$. These oscillations disappear when utilizing tighter tolerances. Thus, strict energy decay can be achieved by utilizing sufficiently tight tolerances in the Uzawa/ADMM iterations.

\begin{table}[htb]
\centering
\begin{tabular}{|c|ccc|ccc|ccc|}
\hline
&\multicolumn{3}{|c|}{$\mathcal{T}_1$}&\multicolumn{3}{|c|}{$\mathcal{T}_2$}&\multicolumn{3}{|c|}{$\mathcal{T}_3$}\\ \hline
$\zeta\, \backslash\,\rho $ & $1$ &  $0.5$ &  $0.25$ & $1$ &  $0.5$ &  $0.25$ & $1$ &  $0.5$ &  $0.25$ \\
\hline
&\multicolumn{9}{|c|}{$\varepsilon_{\text{pri}}=10^{-6}$}\\\hline
$1$&\cellcolor{green!20}   53  & 63    & 64& \cellcolor{green!20}  143     &    171       &  180 &\cellcolor{green!20}    426    &     444     &    500\\
$4$&181    &     183    &     175 & 482       &  520      &   541 & 1541   &     1580       & 1953\\
$16$&564     &    510  &       511 & 1956  &      1962    &    1892 & 6080 &       6844    &    6630\\
\hline
&\multicolumn{9}{|c|}{$\varepsilon_{\text{pri}}=10^{-7}$}\\
\hline
$1$&\cellcolor{green!20}      67     &    73   &       68 & \cellcolor{green!20} 171     &    190    &     199& \cellcolor{green!20}518   &      554     &    573\\
$4$&200      &   231   &      213 & 574      &   594   &      652& 1764     &   1875     &   1951\\
$16$&619      &   640    &     660 & 1954    &    2033 &       1964 & 6311  &      7418     &   6989 \\
\hline
&\multicolumn{9}{|c|}{$\varepsilon_{\text{pri}}=10^{-8}$}\\
\hline
$1$& \cellcolor{green!20}77       &   83   &       88 & \cellcolor{green!20}190   &      236    &     255 &\cellcolor{green!20} 629   &      665    &     720\\
$4$& 256    &     259     &    279 & 721     &    796     &    818 & 2130      &  2277    &    2354\\
$16$& 841     &    807   &      827 & 2211     &   2437    &    2368& 7557     & 7418     &   8015\\
\hline
\end{tabular}
\caption{Total number of Uzawa/ADMM iterations on the three triangulations and for various choices of the Uzawa/ADMM parameters: $\zeta\in\{1,4,16\}$, $\rho\in\{1,0.5,0.25\}$, $\varepsilon_{\text{pri}} \in \{10^{-6},10^{-7},10^{-8}\}$.}
\label{Tab:Cu-Uzawa}
\end{table}

Table \ref{Tab:Cu-Uzawa} reports the cumulative (across the whole outer iteration) number of Uzawa/ADMM iterations for the different choices of $\zeta, \rho,$ and $\varepsilon_{\text{pri}}$ considered above and for the three triangulations. 
An increase in the number of Uzawa/ADMM iterations is observed (as expected) with mesh refinement and when making the tolerance $\varepsilon_{\text{pri}}$ tighter. 
Another observation is that the augmentation parameter $\zeta$ has a strong influence on the iteration count: larger values (for example $\zeta=16$) lead to (much) more iterations across all meshes. The effect of $\rho$
is comparatively milder, though smaller values tend to slightly increase the iteration count.
The lowest values of the iteration count are marked in green in Table \ref{Tab:Cu-Uzawa}
for each value of $\varepsilon_{\text{pri}}$ and each mesh. In all cases, the lowest iteration count is observed for $(\zeta, \rho)=(1,1)$.  

\begin{figure}[htb]
\centering
{\includegraphics[width=10cm,height=7cm]{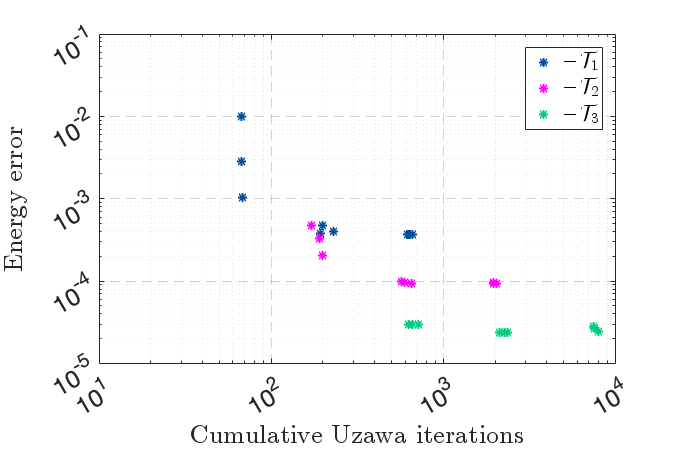}}
\caption{Final energy error $\Delta E_i$ versus cumulative Uzawa/ADMM iterations. Each star correspond to a choice of the parameter values $\zeta$ and $\rho$. The color of the stars reflects the level $i$ of mesh refinement. The tolerance is $\varepsilon_{\text{pri}}=10^{-7}$ on $\calT_1$ and $\calT_2$ and $\varepsilon_{\text{pri}}=10^{-8}$ on $\calT_3$.}
\label{figl:Ene-CuUza}
\end{figure}

Figure \ref{figl:Ene-CuUza} illustrates the trade-off between accuracy and computational cost by
combining the results reported in Tables \ref{Tab:en-err-Ex1} and \ref{Tab:Cu-Uzawa}. The tolerance is set to $\varepsilon_{\text{pri}}=10^{-7}$ on the meshes $\calT_1$ and $\calT_2$ and $\varepsilon_{\text{pri}}=10^{-8}$ on the mesh $\calT_3$. Only three simulations lead to a cumulative iteration count below 100, those on $\calT_1$ with $\zeta=1$. The corresponding energy error $\Delta E_1$ is still fairly large, and takes smaller values by decreasing $\rho$. We also observe that the accuracy to cost ratio achieved on $\calT_1$ using $\zeta=4$ and on $\calT_2$ using $\zeta=1$ is about the same. Finally, as expected, the better errors are obtained at the price of a stronger computational effort, but an interesting observation is that increasing the value of $\zeta$ is not beneficial in terms of accuracy/cost trade-off, as the accuracy only slightly improves, but the cost swiftly increases for larger values of $\zeta$. 
Altogether, the results of Figure \ref{figl:Ene-CuUza} suggest the following choices of parameters: $\zeta=16$, $\rho=1$, $\varepsilon_{\text{pri}}=10^{-7}$ on $\calT_1$, $\zeta=4$, $\rho=1$, $\varepsilon_{\text{pri}}=10^{-7}$ on $\calT_2$, and $\zeta=1$, $\rho=1$, $\varepsilon_{\text{pri}}=10^{-8}$ on $\calT_3$.

\begin{table}[htb]
\centering
\begin{minipage}{0.5\textwidth}
\centering
\resizebox{\textwidth}{!}{
\begin{tabular}{|c|c|c|c|c|c|}
\hline
$i$&$E(\Psi_{\bx_0})$ & $E(\Psi_{\bx_0}^I)$ & $E(\Psi^{\rm cv}_{\mathcal{T}_i})$ &  $\Delta E_{i}$& rate\\
\hline
1& 1.151 & 1.152 & 1.152& 3.69 $\times 10^{-4}$&--  \\
2& 1.151 & 1.151 & 1.151 & 9.93$\times 10^{-5}$&1.98\\
3& 1.151 & 1.151 & 1.151& 2.98$\times 10^{-5}$&1.74\\
\hline 
\end{tabular}}
\end{minipage}
%
\begin{minipage}{0.43\textwidth}
\centering
\resizebox{\textwidth}{!}{
\begin{tabular}{|c|c|c|c|}  
\hline
$j\, \backslash\, i$&1 & 2 & 3 \\
\hline
$0$&8.872& 9.32 &  9.52 \\
$1$&3.69$\times 10^{-4}$  & 9.96$\times 10^{-5}$  & 2.98 $\times 10^{-5}$\\
$2$&3.69$\times 10^{-4}$ & 9.93$\times 10^{-5}$ & 2.98 $\times 10^{-5}$\\
\hline
\end{tabular}} 
\end{minipage}
\caption{Left: exact energy $E(\Psi_{\bx_0})$, energy for the nodal interpolation of the exact solution, $E(\Psi_{\bx_0}^I)$, and computed energy $E(\Psi^{\rm cv}_{\mathcal{T}_i})$ at convergence, together with the  error $\Delta E_{i}:=E(\Psi^{\rm cv}_{\mathcal{T}_i}) - E(\Psi_{\bx_0})$ and convergence rates, for the three triangulations $\mathcal{T}_i$, $i\in\{1,2,3\}$. Right:  Energy error $\Delta E_i^j:= E(\Psi^j_{\mathcal{T}_{i}})-E(\Psi_{\bx_0})$ for the outer iteration index $j\in\{0,1,2\}$ and the mesh index $i\in\{1,2,3\}$.}
\label{table:three-ener-Ex1}
\end{table}

\begin{table}[htb]
\centering
\begin{tabular}{|c|cc|cc|}
\hline
$i$&$H^1$-seminorm & rate & $L^2$-norm &rate \\
\hline
1 &6.79$\times 10^{-2}$ & --  & 1.51$\times 10^{-3}$ & --   \\
2 &3.41$\times 10^{-2}$ & 0.99 & 6.57$\times 10^{-4}$ & 1.20\\
3 &1.74$\times 10^{-2}$ & 0.98 & 4.03$\times 10^{-4}$ & 0.70\\
\hline
\end{tabular}
\caption{Errors in energy- and $L^2$-norm on $\Psi^{\rm cv}_{\mathcal{T}_i}$ and convergence rates. }
\label{Table:error-estimates}
\end{table}

Table \ref{table:three-ener-Ex1} (left) reports the exact energy $E(\Psi_{\bx_0})$ (computed using a sixth-order quadrature on each mesh), the energy $E(\Psi_{\bx_0}^I)$ for the nodal interpolation $\Psi_{\bx_0}^I$ of the exact solution $\Psi_{\bx_0}$, and the final discrete energy $E(\Psi^{\rm cv}_{\mathcal{T}_i})$ computed for the above choices of the Uzawa/ADAM parameters and tolerances for each triangulation ${\calT_i}$, $i\in\{1,2,3\}.$ We also report the corresponding energy errors $\Delta E_{i}$ and convergence rates. We observe that the energy $E(\Psi_{\bx_0}^I)$ using nodal interpolation  provides an upper bound on the exact energy, $E(\Psi_{\bx_0})$, in this example. However, this is not expected in general due to the nonconforming discretization of the constraint manifold $\calN.$ 
Table  \ref{table:three-ener-Ex1} (right) displays the energy errors $\Delta E_{i}^j:=E(\Psi^j_{\mathcal{T}_i})-E(\Psi_{\bx_0})$ as a function of the outer iteration index $j\in\{0,1,2\}$ and the mesh index $i\in\{1,2,3\}$. On the three triangulations, 
the outer iteration converges at $j=2$, and the improvement in energy error as the mesh is refinend is clearly visible.
Table \ref{Table:error-estimates} presents the errors on $\Psi^{\rm cv}_{\mathcal{T}_i}$ evaluated in the $H^1$-seminorm and in the $L^2$-norm, together with the  corresponding convergence rates. First-order convergence is observed for both norms, which is the expected rate for the $H^1$-seminorm. It is not clear that an improved rate can be achieved in the $L^2$-norm as no duality argument is available.

\begin{figure}[htb]
\centering
{\includegraphics[width=8cm,height=6cm]{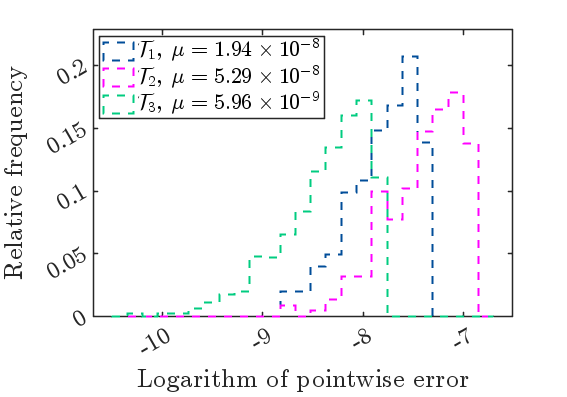}}
\caption{Histograms of absolute pointwise error (logarithm scale) between $\disp^*$ and $\varphi(\disr^*)$ at convergence of outer iteration on the three triangulations. The mean value for each distribution is also reported in the legend.}
\label{figl:energy-errorEx1}
\end{figure}

Figure \ref{figl:energy-errorEx1} presents the histograms of the absolute pointwise errors between $\disp^*$ and $\varphi(\disr^*)$ (in logarithmic scale) sampled at each interior mesh vertex for the first  three triangulations $\mathcal{T}_i$, $i\in\{1,2,3\}$. The histograms are normalized by the total number of interior mesh vertices. 
The distributions are supported in the interval $[-8.75,-7.35]$ for $i=1$, $[-8.77,-6.9]$ for $i=2$, and $[-10.3,-7.83]$) for $i=3$.
A progressive leftward shift of the mean value ($\mu$), starting from $1.94\times 10^{-8}$ on $\mathcal{T}_1$ to $5.96 \times 10^{-9}$ on $\mathcal{T}_3$, is observed. We also notice that the histogram on $\mathcal{T}_2$ is slightly worse than that on $\mathcal{T}_1$, yet both are compatible with the tolerance set to $\varepsilon_{\text{pri}}=10^{-6}$. 

\subsection{Example 2 }\label{example2}
In this section, we define another analytical solution  using  the map $\widetilde{\Phi}:\Real^2 \times \Real^2 \rightarrow \Real^{4}$ such that $\widetilde{\Phi}(\bX;\bx)=\calG(\widetilde{\bn}^{(3)}(\bX;\bx)),$ where $\widetilde{\bn}^{(3)}:=(4\widetilde{n}_1^3- 3\widetilde{n}_1, 3\widetilde{n}_2 - 4\widetilde{n}_2^3)$ for the unit vector $\widetilde{\bn}=(\widetilde{n}_1,\widetilde{n}_2)$ described in Example \ref{example1}. The solution presented in Figure \ref{fig:nMvector-Ex2} is $\Psi_{\by_0}(\bx):=\widetilde{\Phi}(\by_0,\bx)$ for all $\bx\in \Omega$, with $\by_0:=(1.2,0.2)\tr$. The four components $\Psi_{\by_0}:=(\Psi_1,\Psi_2,\Psi_3,\Psi_4)$ are displayed in Figure \ref{fig:surface-Ex2}. The reference exact energy, computed using a sixth-order quadrature on each mesh, is $E(\Psi_{\by_0})\approx 57.26846.$ 
This analytical solution is more challenging to approximate than the previous one and motivates the use of one additional level of mesh refinement.

\begin{figure}[htb]\centering
\includegraphics[width=0.35\linewidth]
{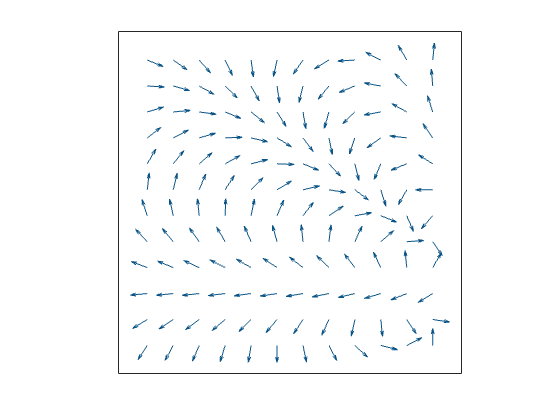}
\includegraphics[width=0.35\linewidth]
{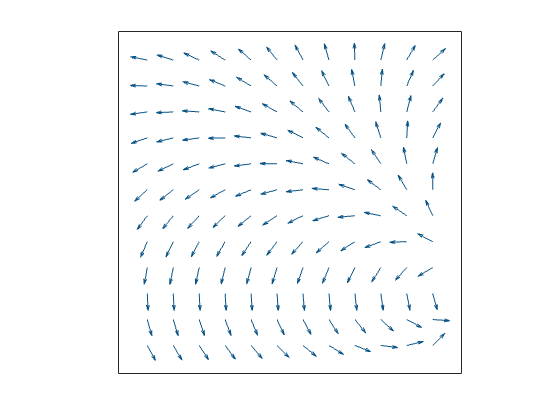}
\caption{Nematic profile $\Qvec$ (left) and magnetic profile $\Mvec$  (right) for the exact solution $\Psi_{\by_0}$ with $\by_0=(1.2,0.2)\tr$; the domain is $\Omega:=(0,1)^2$.}
\label{fig:nMvector-Ex2}
\end{figure}

\begin{figure}[htb]\centering
\subfloat[$\Psi_1$]{\includegraphics[width=0.26\linewidth]
{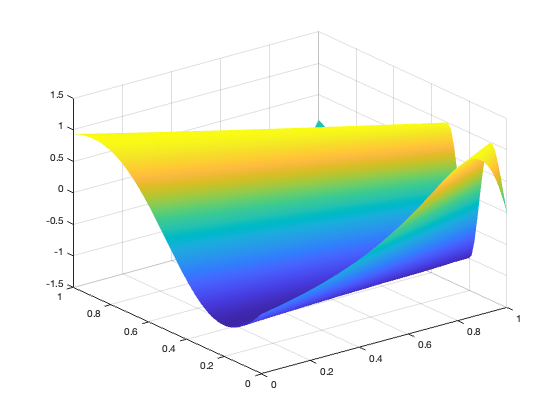}}
\subfloat[$\Psi_2$]{\includegraphics[width=0.26\linewidth]
{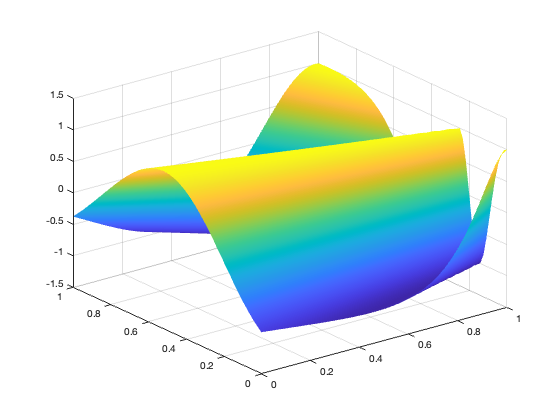}}
\subfloat[$\Psi_3$]{\includegraphics[width=0.26\linewidth]
{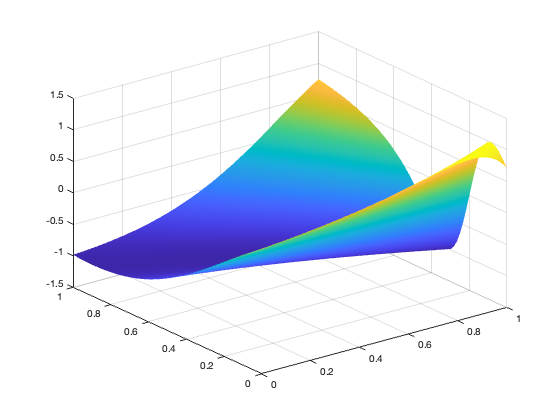}}
\subfloat[$\Psi_4$]{\includegraphics[width=0.26\linewidth]
{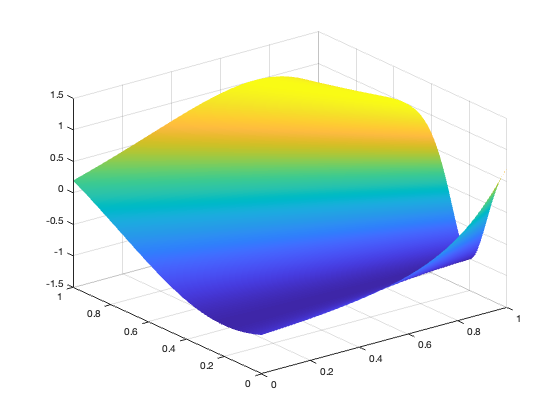}}
\caption{ Exact solution $\Psi_{\by_0}=(\Psi_1,\Psi_2,\Psi_3,\Psi_4).$ }
\label{fig:surface-Ex2}
\end{figure}

As detailed in Section \ref{example1}, the choice of the Uzawa/ADMM tolerance $\varepsilon_{\text{pri}}$ is primarily guided by three criteria:
decay of the discrete energy error, energy decrease in the outer loop without oscillation, and computational cost measured by the total number of Uzawa/ADMM iterations. Based on these considerations and repeating the same numerical experiments as above (details are omitted for brevity), we eventually choose $\varepsilon_{\text{pri}} = 10^{-7}$ on $\mathcal{T}_1$ and $\mathcal{T}_2$, $\varepsilon_{\text{pri}} = 10^{-8}$ on $\mathcal{T}_3$, and $\varepsilon_{\text{pri}} = 10^{-9}$ on $\mathcal{T}_4$, and restrict our analysis to these tolerance values. We notice, in particular, that these choices deliver energy decay at each outer iteration. 
Table \ref{Tab:en-err-Ex2} presents the  energy error at convergence,  $\Delta E_i := E(\Psi_{\by_0})-E(\Psi^{\rm cv}_{\mathcal{T}_i}) $  for the Uzawa/ADMM parameters
$\zeta\in\{1,4,16\}$, $\rho\in\{1,0.5,0.25\}$, and the  four triangulations $\mathcal{T}_i$, $i\in\{1,2,3,4\}$. Compared to Example \ref{example1}, we notice that we have reversed the sign of $\Delta E_i$ here as the computed final energy $E(\Psi^{\rm cv}_{\mathcal{T}_i})$ turns out to be lower than $E(\Psi_{\by_0}).$ 
The key observation in Table \ref{Tab:en-err-Ex2} is the influence of discretization error across meshes. The error $\Delta E_i$ is rather substantial on the coarsest mesh and improves significantly under mesh refinement. This indicates the necessity of sufficiently fine triangulations to achieve an accurate approximation of the exact solution, and consequently, of the exact energy. We observe the lowest  error $\Delta E_i$   
for the choice $(\zeta,\rho)=(4,1)$ if $i=1$ and $(\zeta,\rho)=(1,1) $ if $i\in\{2,3,4\}$. This observation supports our previous recommendation of choosing a relatively small augmentation parameter $\zeta$ on refined meshes. For a fixed $\zeta$, the error can slightly increase as $\rho$ gets smaller. 
\begin{table}[htb]
\centering
\resizebox{\textwidth}{!}{
\begin{tabular}{|c|ccc|ccc|ccc|ccc|}
\hline
&\multicolumn{3}{|c|}{$\mathcal{T}_1 \,\,(\varepsilon_{\text{pri}}=10^{-7}) $}&\multicolumn{3}{|c|}{$\mathcal{T}_2 \,\,(\varepsilon_{\text{pri}}=10^{-7})$} &\multicolumn{3}{|c|}{$\mathcal{T}_3\,\,(\varepsilon_{\text{pri}}=10^{-8})$} &\multicolumn{3}{|c|}{$\mathcal{T}_4\,\,(\varepsilon_{\text{pri}}=10^{-9})$}\\ \hline
$\zeta\, \backslash\,\rho $ & $1$ &  $0.5$ &  $0.25$ & $1$ &  $0.5$ &  $0.25$ & $1$ &  $0.5$ &  $0.25$& $1$ &  $0.5$ &  $0.25$\\
\hline
1& 5.873  &     5.885 &      5.892 &    1.520  &     1.521  &     1.521   & 0.383  &    0.384   &   0.384 &  0.096   &   0.096 &    0.096\\
4& 5.872  &     5.885 &      5.892 & 1.522  &      1.522  &     1.522   & 0.385   &   0.385    &  0.385 &  0.097  &   0.097   &  0.097 \\
16& 5.872  &     5.885 &      5.893 &   1.522   &    1.522   &    1.522 & 0.385    &  0.385    &  0.385 & 0.097  &   0.097   &  0.097
\\
\hline
\end{tabular}
}
\caption{Final energy error $\Delta E_i:=E(\Psi_{\by_0})-E(\Psi^{\rm cv}_{\mathcal{T}_i})$ on the four  triangulations $\mathcal{T}_i$, $i\in\{1,2,3,4\}$, and for various choices of the Uzawa/ADMM parameters: $\zeta\in\{1,4,16\}$, $\rho\in\{1,0.5,0.25\}$, $\varepsilon_{\text{pri}} \in \{10^{-6},10^{-7},10^{-8},10^{-9}\}$.}
\label{Tab:en-err-Ex2}
\end{table}

\begin{table}[htb]
\centering
\resizebox{\textwidth}{!}{
\begin{tabular}{|c|ccc|ccc|ccc|ccc|}
\hline
&\multicolumn{3}{|c|}{$\mathcal{T}_1 \,\,(\varepsilon_{\text{pri}}=10^{-7}) $}&\multicolumn{3}{|c|}{$\mathcal{T}_2 \,\,(\varepsilon_{\text{pri}}=10^{-7})$} &\multicolumn{3}{|c|}{$\mathcal{T}_3\,\,(\varepsilon_{\text{pri}}=10^{-8})$}&\multicolumn{3}{|c|}{$\mathcal{T}_4\,\,(\varepsilon_{\text{pri}}=10^{-9})$}\\  \hline
$\zeta\, \backslash\,\rho $ & $1$ &  $0.5$ &  $0.25$ & $1$ &  $0.5$ &  $0.25$ & $1$ &  $0.5$ &  $0.25$ & $1$ &  $0.5$ &  $0.25$ \\
\hline
$1$ &104 &  126  & 193& \cellcolor{green!20}60  & \cellcolor{green!20} 60  &  65 & \cellcolor{green!20}191  &       204 &        217 & \cellcolor{green!20}734 &782 & 853\\
$4$ &\cellcolor{green!20}103  & 158 &  263  & 183 &  195 &  186 & 711      &   738  &       789& 2811     &   2959    &    3108\\
$16$ &185  & 251  & 513 & 631 &  635 &  659& 2588      &  2738  &      2748& 9422  &     10013    &   10318\\
\hline  
\end{tabular}}
\caption{Total number of Uzawa/ADMM iterations on the three triangulations and for various choices of the Uzawa/ADMM parameters: $\zeta\in\{1,4,16\}$, $\rho\in\{1,0.5,0.25\}$, $\varepsilon_{\text{pri}} \in \{10^{-6},10^{-7},10^{-8},10^{-9}\}$.}
\label{Tab:Cu-Uzawa-Ex2}
\end{table}

Next we examine the total number of Uzawa/ADMM iterations for $\zeta \in \{1,4,16\}$ and $\rho \in \{1,0.5,0.25\}$. Larger values of $\zeta$ lead to a higher iteration counts, whereas the influence of $\rho$ is comparatively milder, yielding similar iteration counts in most cases except on $\mathcal{T}_1$. On this coarse mesh, the number of iterations increases rapidly with increasing $\zeta$ and decreasing $\rho$. The minimal iteration counts are observed for $(\zeta,\rho) = (4,1)$ on $\mathcal{T}_1$, $(\zeta,\rho) \in\{(1,1),(1,0.5)\}$ on $\mathcal{T}_2$,   $(\zeta,\rho) = (1,1)$ on $\mathcal{T}_3$, and $(\zeta,\rho) = (1,1)$  on $\mathcal{T}_4$. The lowest iteration count is actually observed on $\mathcal{T}_2$ rather than on $\mathcal{T}_1$. 
Moreover, a similar study to the one reported in Figure \ref{figl:energy-errorEx1}  on accuracy versus computational cost (figure omitted for brevity) indicates that on every triangulation, the variations in $(\zeta,\rho)$ primarily affect the cost rather than the attained accuracy. In contrast, mesh refinement leads to an accuracy improvement. 
Altogether, the results of Tables \ref{Tab:en-err-Ex2} and \ref{Tab:Cu-Uzawa-Ex2}  suggest the following choices of parameters: $\zeta=4, \rho=1,  \varepsilon_{\text{pri}} = 10^{-7}$  on $\calT_1,$ $\zeta=1, \rho=1 ,  \varepsilon_{\text{pri}} = 10^{-7}$ on $\calT_2,$ $\zeta=1, \rho=1 ,  \varepsilon_{\text{pri}} = 10^{-8}$ on $\calT_3,$   and $\zeta=1, \rho=1,  \varepsilon_{\text{pri}} = 10^{-9} $  on $\calT_4.$

\begin{table}[H]
\centering
\begin{minipage}{0.49\textwidth}
\centering
\resizebox{\textwidth}{!}{
\begin{tabular}{|c|c|c|c|c|c|}
\hline
$i$ &$E(\Psi_{\by_0})$ & $E(\Psi_{\by_0}^I)$ &  $E(\Psi^{\rm cv}_{\mathcal{T}_i})$&  $\Delta E_{i}$& rate\\
\hline
1& 57.268 & 51.594 & 51.396 & 5.872 &--\\
2& 57.268 & 55.753 & 55.747 &  1.522&1.948\\
3&57.268 & 56.884 & 56.886&0.383 &1.991\\
4 &57.268 & 57.172 & 57.173&0.096 &2.001\\\hline 
\end{tabular}}
\end{minipage}
\begin{minipage}{0.42\textwidth}
\centering
\resizebox{\textwidth}{!}{
\begin{tabular}{|c|c|c|c|c|}  
\hline
$j\, \backslash\,i$& 1 & 2 & 3 & 4 \\
\hline
0&-14.90& -36.10&-46.38  &-49.78\\
1&5.872& 1.522 & 0.383 &0.096\\
2&5.872& 1.522 & 0.383 &0.096\\
3&5.872& 1.522 & &\\
4&5.872& 1.522 & &\\
5&5.872&  & &\\
6&5.872& & &\\
7&5.872& & &\\
8&5.872& & &\\
9&5.872& & & \\
10&5.872& & &\\
11&5.872& & &\\
12&5.872& & &\\
13&5.872& & &\\
14&5.872& & &\\
\hline
\end{tabular}}
\end{minipage}
\caption{Left: exact energy, $E(\Psi_{\by_0})$, energy for the nodal interpolation of the exact solution, $E(\Psi_{\by_0}^I)$, and computed energy $E(\Psi^{\rm cv}_{\mathcal{T}_i})$ at convergence, together with the  error $\Delta E_{i}:=E(\Psi^{\rm cv}_{\mathcal{T}_i}) - E(\Psi_{\by_0})$ and convergence rates, for the four  triangulations $\mathcal{T}_i$, $i\in\{1,2,3,4\}$. Right:  Energy error $\Delta E_i^j:= E(\Psi^j_{\mathcal{T}_{i}})-E(\Psi_{\by_0})$ for the outer iteration index $j\in\{0,1,\ldots\}$ and the mesh index $i\in\{1,2,3,4\}$.}
\label{table:three-ener-Ex2}
\end{table}

\begin{table}[htb]
\centering
\begin{tabular}{|c|cc|cc|}
\hline
$i$ &$H^1$-seminorm & rate & $L^2$-norm &rate \\
\hline
1 & 3.70  &    -- &  $1.08 \times 10^{-1}$ &    --  \\
2 &1.88 &  0.98 &  $2.55 \times 10^{-2}$ &  2.08 \\
3 &0.95 &  0.98 &  $6.87\times 10^{-3}$ &  1.89\\
4 &0.48  & 0.99  & $1.86\times 10^{-3}$  & 1.89\\
\hline 
\end{tabular}
\caption{Errors in energy- and $L^2$-norm on $\Psi^{\rm cv}_{\mathcal{T}_i}$ and convergence rates.}
\label{Table:ex2-error-estimates}
\end{table}

Table \ref{table:three-ener-Ex2} (left) reports the energies  for the exact solution, $\Psi_{\by_0}$, its nodal  interpolation, $\Psi_{\by_0}^I$, and the discrete solution $\Psi^{\rm cv}_{\mathcal{T}_i}$ computed for the above  parameters and tolerances. The  convergence rate of the corresponding energy error $\Delta E_{i}$ is $2$. The energy of the nodal interpolant, $E(\Psi_{\by_0}^I)$, turns out to be  smaller (even on the finest mesh) than the exact energy. The discrete energy $E(\Psi^{\rm cv}_{\mathcal{T}_i})$, instead, turns out to be an upper bound on $E(\Psi_{\by_0}^I)$,  but still stays below $E(\Psi_{\by_0})$. However, the final energy error $\Delta E_i$ swiftly decays as the mesh is refined (with second order).
The energy errors $\Delta E_i^j:=   E(\Psi_{\by_0})-E(\Psi^j_{\mathcal{T}_i})$ as a function of the outer iteration index $j\in \{0,1,2, \ldots \}$ and the mesh index $i\in\{1,2,3\}$ are reported in Table \ref{table:three-ener-Ex2} (right). The total number of outer iterations is $j=14$ on $\calT_1,$ $j=4$ on $\calT_2,$ and $j=2$ on $\calT_3$ and $\calT_4$. Table \ref{Table:ex2-error-estimates} reports the errors on $\Psi^{\rm cv}_{\mathcal{T}_i}$ evaluated in the $H^1$-seminorm and in the $L^2$-norm, together with the corresponding convergence rates. First-order convergence is observed in the $H^1$-seminorm, as in the previous test case, whereas here second-order convergence is observed in the $L^2$-norm. This improved convergence with respect to the previous test case may be attributed to the fact that the errors are larger here.

\begin{figure}[htb]
\centering
{\includegraphics[width=8cm,height=6cm]{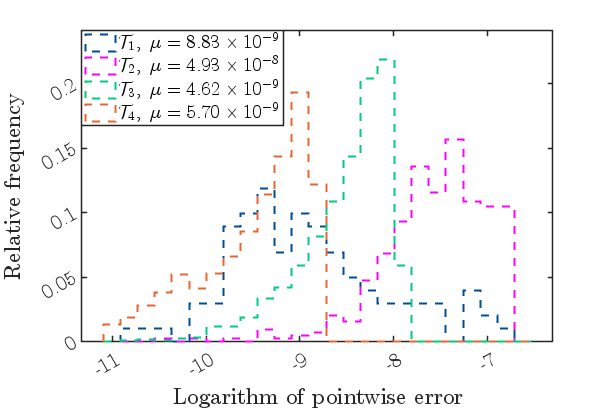}}
\caption{Histograms of absolute pointwise error (logarithm scale) between $\disp^*$ and $\varphi(\disr^*)$ at convergence of outer iteration on the four triangulations. The mean value for each distribution is also reported in the legend.}
\label{figl:energy-errorEx2}
\end{figure}

Finally, the histograms shown in Figure \ref{figl:energy-errorEx2} are evaluated from the pointwise errors \(\log_{10}(| \disp^* -\varphi(\disr^*)|) \) at each interior mesh vertex for the four triangualtions. The error distributions are supported in the interval $[-10.76, -6.83]$ for  $i=1,$ $[-10.43,-6.74]$ for $i=2,$ $[-9.73, -7.92 ]$ for  $i=3$, and $[-11.75, -8.80]$ for $i=4.$ 
A progressive leftward shift of the histograms is observed, although the behavior is not strictly monotone as the meshes are refined. Altogether, the histograms reflect errors that remain compatible with the prescribed tolerance at each refinement level.

\bibliographystyle{amsplain}   
\bibliography{References_LDG}

\end{document}